\newtheorem{exercise}{Exercise}
\newtheorem{lem}[exercise]{Lemma}
\newtheorem{prop}[exercise]{Proposition}
\newtheorem{proposition}[exercise]{Proposition}
\newtheorem{theorem}[exercise]{Theorem}
\theoremstyle{definition}
\newtheorem{example}{Example}
\newtheorem{remark}{Remark}
\newcommand{\af}{\mathrm{af}}
\newcommand{\Aut}{\mathrm{Aut}}
\def\C{{\mathbb C}}
\def\des{\mathrm{des}}
\newcommand{\Des}{\mathrm{Des}}
\newcommand{\ex}{\mathrm{e}}
\newcommand{\Fl}{\mathrm{Fl}}
\newcommand{\geh}{{\mathfrak{g}}}
\newcommand{\Gr}{\mathrm{Gr}}
\newcommand{\hS}{\hat S}
\newcommand{\id}{\mathrm{id}}
\newcommand{\ip}[2]{\langle #1\,,\,#2 \rangle}
\newcommand{\la}{\lambda}
\def\Schub{{\mathfrak{S}}}
\newcommand{\tp}{\tilde{p}}
\newcommand{\tq}{\tilde{q}}
\newcommand{\tP}{\tilde{P}}
\newcommand{\tQ}{\tilde{Q}}
\newcommand{\tS}{\tilde{S}}
\def\Z{{\mathbb Z}}
\title{From quantum Schubert polynomials to $k$-Schur functions via the Toda lattice}
\author{Thomas Lam}
\address{Department of Mathematics,
University of Michigan, 530 Church St., Ann Arbor, MI 48109 USA}
\email{tfylam@umich.edu}
 \thanks{T.L. was supported by NSF grant DMS-0901111, and by a Sloan Fellowship.}
\author{Mark Shimozono}
\address{Department of Mathematics, Virginia Polytechnic Institute
and State University, Blacksburg, VA 24061-0123 USA}
\email{mshimo@vt.edu}
\thanks{M.S. was supported by NSF DMS-0652641 and DMS-0652648.}
\begin{document}
\begin{abstract}
We show that Lapointe-Lascoux-Morse $k$-Schur functions (at $t=1$) and Fomin-Gelfand-Postnikov quantum Schubert polynomials can be obtained from each other by a rational substitution.  This is based upon Kostant's solution of the Toda lattice and Peterson's work on quantum Schubert calculus.
\end{abstract}

\maketitle

\section{The theorem}

\subsection{Quantum Schubert polynomials}
Fomin, Gelfand,  and Postnikov's quantum Schubert polynomials $\Schub_w^q$ are a family of polynomials in the variables $x_1, x_2,\dotsc, x_n$ and the quantum parameters $q_1,q_2,\ldots,q_{n-1}$ indexed by permutations $w \in S_n$.  They show \cite{FGP} that quantum Schubert polynomials represent quantum Schubert classes in the Givental-Kim presentation \cite{GK} of the small quantum cohomology ring $QH^*(\Fl_n)$ of the flag manifold.

\begin{example}
Let $n = 3$.  Then
\begin{align*}
\Schub_1^q &= 1 \hspace{50pt} \Schub_{s_1}^q = x_1 \hspace{42pt} \Schub _{s_2}^q = x_1 + x_2 \\
\Schub_{s_1 s_2}^q &= x_1 x_2 + q \ \ \ \Schub_{s_2s_1}^q = x_1^2 - q \ \ \
\Schub_{s_1s_2s_1}^q = x_1^2x_2 + q_1x_1
\end{align*}
\end{example}

\subsection{$k$-Schur functions}
Let $k = n-1$.  Lapointe, Lascoux, and Morse's $k$-Schur functions \cite{LLM} $s_\lambda^{(k)}$ are a basis of the
Hopf subalgebra $\Lambda_{(n)} = \C[h_1,h_2,\ldots,h_{n-1}] \subset \Lambda$ of symmetric functions generated by the first $n-1$ homogeneous symmetric functions.  They are indexed by $k$-bounded partitions, that is, partitions $\la$ with $\la_1 \leq k$.  Lam \cite{L} showed that $k$-Schur functions represent affine Schubert classes under the realization of the
homology ring $H_*(\Gr)\cong \Lambda_{(n)}$ of the affine Grassmannian $\Gr = \Gr_{SL(n)}$ inside symmetric functions.

\begin{example}
Let $n = 3 = k+1$.  Then
\begin{align*}
s_{2^a 1^{2b}}^{(2)} &= h_2^a e_2^b \\
s_{2^a 1^{2b+1}}^{(2)} &= h_2^a e_2^b h_1
\end{align*}
where $e_2 = h_1^2 - h_2$ is the second elementary symmetric function.
\end{example}

\subsection{The substitution}
For $i \in [0,n]$, let $R_i$ denote the rectangular partition $i^{n-i}$ having $i$ columns and $n-i$ rows.
Let $R'_i$ denote the partition obtained from $R_i$ by removing the outer corner of the Young diagram of $R_i$.
In the following, let $s_{R_0} = s_{R_n} = 1$ and $s_{R'_0} = s_{R'_n} = 0$.

Define a map $\Phi: \C[x_1,x_2,\ldots,x_n,q_1,q_2,\ldots,q_{n-1}] \to \Lambda_{(n)}[s_{R_1}^{-1},\dotsc,s_{R_{n-1}}^{-1}]$ by
\begin{align*}
x_1 + x_2 + \cdots x_i &\longmapsto \frac{s_{R'_i}}{s_{R_i}}\\
q_i &\longmapsto \frac{s_{R_{i-1}}s_{R_{i+1}}}{s_{R_i}^2}
\end{align*}

\begin{theorem}\label{T:main}
Let $w \in S_n$.  Then
$$
\Phi(\Schub_w^q) = \frac{s_{\lambda(w)}^{(k)}}{\prod_{i \in \Des(w)} s_{R_i}}
$$
where $\lambda(w)$ is a $k$-bounded partition
explicitly described in Section \ref{S:la} and $\Des(w)=\{i\mid ws_i<w\}$
denotes the descent set of $w$.
\end{theorem}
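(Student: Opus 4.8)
The plan is to recognize the substitution $\Phi$ as an explicit incarnation of Peterson's isomorphism between a localization of the small quantum cohomology ring $QH^*(\Fl_n)$ and a localization of the homology ring $H_*(\Gr)\cong\Lambda_{(n)}$. Peterson's theorem, in the form we have established elsewhere, asserts that there is a ring isomorphism carrying each quantum Schubert class $\sigma_w^q$, multiplied by a suitable monomial in the quantum parameters, to an affine Schubert class; on the level of representatives this sends the quantum Schubert polynomial $\Schub_w^q$ to a $k$-Schur function up to a denominator. Thus the first step is to write down Peterson's map concretely for $SL(n)$ and to check that, after clearing the localization, it agrees generator-by-generator with $\Phi$.

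The second step supplies the explicit formulas through Kostant's solution of the Toda lattice. Here the rectangular classes $s_{R_i}$ play the role of the Toda $\tau$-functions: the translation elements $t_{\omega_i^\vee}$ of the affine Weyl group index precisely the affine Schubert classes $s_{R_i}$, and these are the images of the quantum parameters under Peterson's map. The characteristic Toda relation $\ddot{\log\tau_i}=\tau_{i-1}\tau_{i+1}/\tau_i^2$ is mirrored by the assignment $q_i\mapsto s_{R_{i-1}}s_{R_{i+1}}/s_{R_i}^2$, while the logarithmic-derivative structure of the Toda flow produces the assignment $x_1+\cdots+x_i\mapsto s_{R'_i}/s_{R_i}$, the removal of a corner from $R_i$ reflecting the underlying derivation. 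Since the values of $\Phi$ on the nested sums determine $\Phi$ on each $x_i$ by subtraction, $\Phi$ is a well-defined ring homomorphism, and the content of this step is that it coincides with the Kostant--Peterson isomorphism on the Givental--Kim generators of $QH^*(\Fl_n)$.

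With the map identified, the third step matches the Schubert bases. The quantum Schubert polynomial $\Schub_w^q$ represents $\sigma_w^q$ in the Givental--Kim presentation \cite{FGP,GK}, so $\Phi(\Schub_w^q)$ is the image of $\sigma_w^q$ under Peterson's isomorphism; by Lam's realization \cite{L} this image, before localization, is the $k$-Schur function $s_{\lambda(w)}^{(k)}$ representing the corresponding affine Schubert class. The partition $\lambda(w)$ is read off from the affine Weyl group element (a minimal coset representative, equivalently an $n$-core) that Peterson's map attaches to $w$, which is the combinatorial description to be recorded in Section \ref{S:la}. The remaining task is to pin down the localization denominator: tracking the monomial in the quantum parameters attached to $w$ and converting it through $q_i\mapsto s_{R_{i-1}}s_{R_{i+1}}/s_{R_i}^2$ should telescope to exactly $\prod_{i\in\Des(w)}s_{R_i}$.

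The main obstacle is the precise bookkeeping in this last step: establishing the bijection $w\mapsto\lambda(w)$ and verifying that the accumulated powers of the $s_{R_i}$ collapse to the clean product $\prod_{i\in\Des(w)}s_{R_i}$. This requires controlling how Peterson's map interacts with the Schubert stratifications on both sides --- in particular how the descent set of $w$ governs which rectangular classes appear --- and carefully matching the degree shifts between the quantum and affine normalizations. The telescoping itself is a calculation, but identifying the correct indexing and the exact denominator is where the geometry of the two Schubert bases must be reconciled.
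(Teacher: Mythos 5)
Your overall architecture is the paper's: compose the Givental--Kim, Ginzburg--Peterson and Kostant identifications to recognize $\Phi$ as the coordinate-ring form of Peterson's isomorphism $QH^*(\Fl_n)[q_i^{-1}]\cong H_*(\Gr)[\xi_{t_\la}^{-1}]$, then push the Schubert representatives of Fomin--Gelfand--Postnikov and of Lam through it. The generator-by-generator check you propose is also how the paper proceeds (via $\alpha_i^\vee=-\omega_{i-1}^\vee+2\omega_i^\vee-\omega_{i+1}^\vee$ with $q_{\omega_i^\vee}\mapsto 1/s_{R_i}$, and $E^q_1(i)\mapsto (h_1^\perp s_{R_i})/s_{R_i}=s_{R'_i}/s_{R_i}$), so that part of the plan is sound even though your Toda $\tau$-function remarks are motivation rather than argument.

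The genuine gap is in the last step, which you defer as ``bookkeeping'' but which is the actual content of the theorem, and the mechanism you propose for the denominator is not the right one. There is no monomial in the $q_i$ ``attached to $w$'' to telescope: Peterson's map sends $\sigma^w q_{\mu-\la}\mapsto \xi_{wt_\mu}\xi_{t_\la}^{-1}$ only when $wt_\mu$ is a minimal-length coset representative, so to evaluate $\Phi(\sigma^w)$ one takes $\la=\mu$ antidominant with $wt_\mu\in\hS_n^0$ and the denominator is $\xi_{t_\mu}$. The descent set enters because $wt_\mu$ is Grassmannian precisely when $\ip{\mu}{\alpha_i}<0$ for every descent $i$ of $w$ (Lemma \ref{L:coset}); the minimal choice is therefore $\mu=-\sum_{i\in\Des(w)}\omega_i^\vee$, and $\xi_{t_{-\omega_i^\vee}}=\xi_{d_i}\mapsto s_{R_i}$ yields the stated product. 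What your proposal does not supply is the identification of the numerator: one must show that $wt_\mu$, modulo an affine Dynkin diagram rotation, is the affine Grassmannian element whose $(n-1)$-bounded partition under the bijection of \cite{LLMS} is the explicitly described $\la(w)$. In the paper this is Lemma \ref{L:law}, proved by computing the window $y(i)=w(i)-\des_i(w)\,n$ and counting inversions against \cite[Proposition 8.15]{LLMS} using the factorization $w=c_{n-2}^{m_1}\dotsm c_1^{m_{n-2}}$, together with Proposition \ref{P:cyclic} to reduce to the case $w(1)=1$. Without that argument you have shown only that $\Phi(\Schub_w^q)$ is some $k$-Schur function divided by $\prod_{i\in\Des(w)}s_{R_i}$, not the stated formula.
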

Because of a factorization result (Theorem \ref{T:LM}) of Lapointe and Morse, the above theorem determines $s_\lambda^{(k)}$ for all $\lambda$.

\begin{example}
Take $w = s_1 s_2 s_1$.  Then
\begin{align*} \Phi(\Schub_{s_1s_2s_1}^q)&=\Phi(x_1^2x_2 + q_1x_1) \\
&= \left(\frac{h_1}{e_2}\right)^2\left(\frac{h_1}{h_2}-\frac{h_1}{e_2}\right) + \frac{h_2}{e_2^2}\frac{h_1}{e_2} \\&= \frac{h_1}{e_2h_2}.
\end{align*}
Since $\Des(w) = \{1,2\}$ and $\la(w) = (1)$, this agrees with $s^{(2)}_1 = h_1$.
\end{example}

Let us give an outline of the proof.  One first obtains an abstract isomorphism between localizations of the quantum cohomology $QH^*(\Fl_n)$ of the flag variety and the homology $H_*(\Gr)$ of the affine Grassmannian by the composition of three theorems: (1) the theorem of Kim and of Givental and Kim \cite{GK,Kim} which identifies $QH^*(\Fl_n)$ with the coordinate ring of the nilpotent Toda lattice; (2) a theorem of Ginzburg \cite{Gin} and of Peterson which identifies $H_*(\Gr)$ with the coordinate ring of the centralizer of a principal nilpotent element in $PGL(n)$; and (3) a theorem of Kostant \cite{Kos} which solves the nilpotent Toda lattice.  The substitution $\Phi$ arises in this way.

The quantum and affine Schubert classes are compared using a result of Peterson \cite{P} (also proved in \cite{LS, LL}).  We carry out the combinatorics explicitly to obtain the description of Theorem \ref{T:main}.  The results of Fomin, Gelfand, and Postnikov \cite{FGP} and of Lam \cite{L} allow us to formulate the result explicitly in terms of polynomials.

\begin{remark}
The form of the denominator in Theorem \ref{T:main} probably follows from the affine
Grassmannian homology Schubert class factorization result of Magyar
\cite{M}.
\end{remark}

\begin{remark}
Let $f^\perp$ denote the linear operator adjoint to multiplication by a symmetric function $f$ under the Hall inner product.
One can get $k$-Schur functions straight from ordinary Schubert polynomials by directly substituting the ratio $(h_i^\perp \cdot
S_{R_m})/S_{R_m}$ of two Schur functions for each elementary symmetric polynomial $e_i(m) = e_i(x_1,x_2,\ldots,x_m)$, after the Schubert polynomial is written in terms of products of $e_i(m)$ (see Section \ref{ssec:qschub}): one may compute the image of $\Schub_w^q$ in
$\Z[h_1,\dotsc,h_{n-1}][s_{R_1}^{-1},\dotsc,s_{R_{n-1}}^{-1}]$ by
replacing each $E^q_i(m)$ by $(h_i^\perp \cdot S_{R_m})/S_{R_m}$ (see Proposition \ref{P:QSchurimage}).
However this is the same as taking the expansion of $\mathfrak{S}_w$
in the $e_{i_1}(1)e_{i_2}(2)\dotsm e_{i_{n-1}}(n-1)$ basis and
making the substitution $e_i(m)\mapsto (h_i^\perp \cdot
S_{R_m})/S_{R_m}$.
\end{remark}

\subsection{Further directions}

In future work, we plan to investigate the generalizations to equivariant (quantum) (co)homology and extensions to other Dynkin types.

A tantalizing open problem is to give a conceptual answer to the question: {\it does the Toda lattice know about Schubert
calculus?}

\medskip
{\bf Acknowledgments.}
We thank Takeshi Ikeda for interesting conversations which inspired this work.

\medskip

\section{Toda lattice and quantum cohomology of flag manifolds}

\subsection{Toda lattice}
The Toda lattice is the Hamiltonian system consisting of $n$ particles with positions $\tq_i$ and momenta $\tp_i$ and Hamiltonian
$$
H(\tp,\tq) = \sum_{i=1}^n \tp^2/2 + \sum_{i=1}^{n-1} e^{\tq_i - \tq_{i+1}}.
$$
The Toda lattice can be reformulated as the system of differential equations $dL/dt = [L,L_-]$ in terms of the Lax pair
$$
L(x,q) = \left(\begin{array}{ccccc}
x_1&-1&&\vphantom{\ddots} \\
q_1&x_2&-1&\vphantom{\ddots} \\
&q_2&x_3&\ddots \\
&&\ddots&\ddots&-1 \\
&&&q_{n-1}&x_n
\end{array}\right)
\qquad
L_{-} =\left(\begin{array}{ccccc}
0&0&&\vphantom{\ddots} \\
q_1&0&0&\vphantom{\ddots} \\
&q_2&0&\ddots \\
&&\ddots&\ddots&0 \\
&&&q_{n-1}&0
\end{array}\right)
$$
where the variables $x_i$ are multiples of $\tp_i$, and $q_i$ are multiples of $e^{\tq_i-\tq_{i+1}}$.  It follows from general theory that $H_k:= {\rm tr}(L^{k+1}/(k+1))$ gives a complete set of commuting Hamiltonians. Thus the Toda lattice is a completely integrable system.

\subsection{Givental and Kim's description of $QH^*(\Fl_n)$}

Let $$Y_0 = \{L(x,q) \mid \mbox{$L$ is nilpotent}\}$$ be the nilpotent Toda leaf.  This is the part of phase space where all Hamiltonians vanish.  Let
$$Y_0^\circ= \{L \in Y_0 \mid q_i \neq 0\}$$
be the part of the nilpotent Toda lattice where the quantum parameters are non-zero.

\begin{theorem}[\cite{GK,Kim}]\label{T:GK}
$$
QH^*(\Fl_n) \simeq \C[Y_0] = \C[x_1,\ldots,x_n,q_1,\ldots,q_{n-1}]/\langle H_k \rangle
$$
\end{theorem}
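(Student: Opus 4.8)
The plan is to identify the coordinate ring $\C[Y_0]$ with the Borel-type presentation of $QH^*(\Fl_n)$ via quantum-deformed elementary symmetric polynomials, and then to match ranks. First I would reduce the defining ideal. The Hamiltonians are the rescaled power sums $H_k = p_{k+1}/(k+1)$ of the eigenvalues of $L(x,q)$, where $p_j = \mathrm{tr}(L^j)$. Writing the characteristic polynomial as $\det(tI - L(x,q)) = t^n - E_1 t^{n-1} + \cdots + (-1)^n E_n$, the coefficients $E_i = E_i(x,q) \in \C[x_1,\dotsc,x_n,q_1,\dotsc,q_{n-1}]$ are the elementary symmetric functions of the eigenvalues of $L$. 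By Newton's identities (valid over $\C$), the ideal $\langle H_k \mid k \geq 0\rangle$ equals $\langle p_1,\dotsc,p_n\rangle = \langle E_1,\dotsc,E_n\rangle$; geometrically each expresses the nilpotency $L^n = 0$ defining $Y_0$. Hence $\C[Y_0] = \C[x,q]/\langle E_1(x,q),\dotsc,E_n(x,q)\rangle$.

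Next I would pin down the classical limit. Setting $q=0$ makes $L(x,0)$ upper bidiagonal with diagonal $x_1,\dotsc,x_n$, so its characteristic polynomial is $\prod_i (t-x_i)$ and therefore $E_i(x,0) = e_i(x_1,\dotsc,x_n)$, the ordinary elementary symmetric polynomials. Thus $\C[Y_0]|_{q=0} = \C[x]/\langle e_1,\dotsc,e_n\rangle$ is exactly the Borel presentation of $H^*(\Fl_n)$, and the $E_i(x,q)$ constitute a flat deformation of the classical relations. It remains to show that this particular deformation is the one realized by quantum multiplication.

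The essential content, and the main obstacle, is this last identification: one must show that the quantum-deformed polynomials $E_i(x,q)$, read off from the characteristic polynomial of the Lax matrix, actually vanish in $QH^*(\Fl_n)$ under the identification of the $x_i$ with the Chern roots of the tautological flag of quotient bundles. I would establish this using the quantum Monk/Pieri rule: iterating the quantum product of divisor classes produces precisely the $q$-corrections encoded in the subdiagonal entries $q_i$ of $L$, so that the tridiagonal structure of $L$ is exactly the bookkeeping device reproducing the quantum deformation of the symmetric-function relations. This is a genuinely geometric input, equivalent to a computation of the three-point genus-zero Gromov--Witten invariants of $\Fl_n$, and verifying that the Lax matrix is the correct such device is where all the work lies.

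Finally I would upgrade the resulting surjection $\C[Y_0] \twoheadrightarrow QH^*(\Fl_n)$ to an isomorphism by a rank count. The $n$ polynomials $E_1,\dotsc,E_n$ cut $Y_0$ out of the $(2n-1)$-dimensional space $\C[x,q]$ in the expected codimension $n$, hence form a regular sequence, so $\C[Y_0]$ is a free $\C[q]$-module of rank $\dim_\C \C[x]/\langle e_1,\dotsc,e_n\rangle = n!$. Since $QH^*(\Fl_n)$ is likewise a free $\C[q]$-module of rank $|S_n| = n!$ with the Schubert classes as a basis, a surjection between free modules of equal finite rank is an isomorphism. I expect the quantum-product verification to be the crux; once the correct deformation $E_i(x,q)$ is identified, the remaining steps are formal.
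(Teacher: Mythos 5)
The paper does not prove this statement: it is imported verbatim from Givental--Kim and Kim, so there is no internal proof to compare against. Measured against the arguments in those references (and the algebraic reformulation in [FGP]), your architecture is the right one: the reduction $\langle H_k\rangle=\langle p_1,\dotsc,p_n\rangle=\langle E_1,\dotsc,E_n\rangle$ via Newton's identities in characteristic $0$ is correct, the specialization $E_i(x,0)=e_i(x)$ recovering the Borel presentation is correct, and the final upgrade of the surjection to an isomorphism by comparing free $\C[q]$-modules of rank $n!$ is the standard closing move. (Two small repairs there: first, you should check that the characteristic-polynomial coefficients of the tridiagonal Lax matrix really are the recursively defined quantum elementary symmetric polynomials $E^q_i(n)$ of Section \ref{ssec:qschub} --- this is a cofactor expansion along the last row, giving exactly the three-term recursion; second, ``expected codimension, hence regular sequence'' quietly presupposes $\dim Y_0=n-1$, which you have not established. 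It is cleaner to give $x_i$ degree $1$ and $q_j$ degree $2$, note the ideal is homogeneous with $\C[x,q]/\langle E_i\rangle \otimes_{\C[q]}\C \cong \C[x]/\langle e_i\rangle$ of dimension $n!$, and apply graded Nakayama to get $n!$ module generators over $\C[q]$; a surjection onto a free module of rank $n!$ then forces an isomorphism, and freeness of $\C[Y_0]$ comes out as a corollary rather than an input.)

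The genuine gap is the step you yourself flag as the crux: you assert that the relations $E^q_i=0$ hold in $QH^*(\Fl_n)$ ``by iterating the quantum Monk/Pieri rule,'' but this is precisely where the entire geometric content of the theorem lives, and the proposal does not carry it out. The quantum Monk rule is itself a nontrivial Gromov--Witten computation, and even granting it, deducing the vanishing of $E^q_i(n)$ requires a genuine induction on the recursion $E^q_i(m)=E^q_i(m-1)+x_mE^q_{i-1}(m-1)+q_{m-1}E^q_{i-2}(m-2)$ together with control of how multiplication by $x_m$ acts on the classes $E^q_i(m-1)$; alternatively one follows [GK] and [Kim], who obtain the relations from the quantum differential equation and equivariant localization rather than from Monk's rule. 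As written, your argument establishes everything except the theorem's actual content, so it should be regarded as a correct reduction of the statement to the quantum Monk/Pieri rule plus the identity $E^q_i(n)=0$, not as a proof.
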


\subsection{Explicit Schubert representatives}\label{ssec:qschub}
Let $w \in S_n$ be a permutation, and let $s_{i_1} s_{i_2} \cdots s_{i_\ell}$ be a reduced decomposition of $w^{-1} w_0$, where $w_0 \in S_n$ is the longest permutation.  The {\it Schubert polynomial} $\Schub_w \in \Z[x_1,x_2,\ldots,x_{n-1}]$ is defined as
$$
\partial_{i_1} \partial_{i_2} \cdots \partial _{i_\ell}(x_1^{n-1} x_2^{n-2} \cdots x_{n-2})
$$
where $\partial_i$ denotes the {\it divided difference operator}
$$
(\partial_i f)(x_1,x_2,\ldots,x_{n-1}) = \frac{f(x_1,\ldots,x_{n-1}) - f(x_1,\ldots,x_{i+1},x_i,\ldots,x_{n-1})}{x_i - x_{i+1}}.
$$

Let $e_i(m) = e_i(x_1,x_2,\ldots,x_m)$ denote the elementary symmetric functions in $m$-variables.  Let $E^q_i(m)$ be the quantum analogue of the $i$-th elementary
symmetric polynomial in variables $x_1,x_2,\dotsc,x_m$. It is
defined by
\begin{equation}
  E^q_i(m) = E^q_i(m-1) + x_m E^q_{i-1}(m-1) + q_{m-1}
  E^q_{i-2}(m-2)
\end{equation}
where $E^q_i(m)=0$ if $i<0$ or $i>m$ and $E^q_0(m)=1$ for $m\ge0$.  Fomin, Gelfand, and Postnikov \cite{FGP} define the {\it quantum Schubert polynomial} $\Schub_w^q$
by expanding the ordinary Schubert polynomial $\Schub_w$ into the basis
$e_{i_1}(1)e_{i_2}(2)\dotsm e_{i_{n-1}}(n-1)$ where
$(i_1,\dotsc,i_{n-1})$ is a tuple of integers such that $0\le i_r
\le r-1$ for $1\le r\le n-1$ and then substituting the quantum elementary symmetric polynomial $E^q_i(m)$ for
each $e_i(m)$.

The quantum cohomology ring $QH^*(\Fl_n)$ has a $\C[q_1,\ldots,q_{n-1}]$-basis of quantum Schubert classes $\{\sigma^w \mid w \in S_n\}$ labeled by permutations.

\begin{theorem}[\cite{FGP}]
Under the isomorphism of Theorem \ref{T:GK} we have $\sigma^w \mapsto \Schub_w^q \mod \langle H_k \rangle$.
\end{theorem}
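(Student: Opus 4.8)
The plan is to verify that the classes $[\Schub_w^q] := \Schub_w^q \bmod \langle H_k\rangle$ satisfy the three properties characterizing the quantum Schubert classes $\sigma^w$ inside $QH^*(\Fl_n)\cong\C[Y_0]$: they form a $\C[q]$-basis, they reduce at $q=0$ to the classical Schubert classes, and they transform correctly under quantum multiplication by the divisor classes. A useful first reduction is to replace the Hamiltonian ideal by the ideal of full quantum elementary symmetric polynomials. Nilpotency of $L(x,q)$ means its characteristic polynomial is $\lambda^n$, and expanding the tridiagonal determinant along the three-term recursion shows $\det(\lambda I - L(x,q))=\sum_{i=0}^n(-1)^iE^q_i(n)\lambda^{n-i}$, so the nonleading coefficients are, up to sign, exactly $E^q_1(n),\dots,E^q_n(n)$. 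Since $H_k=\mathrm{tr}(L^{k+1})/(k+1)$ are the power sums of the eigenvalues, Newton's identities give $\langle H_k\rangle=\langle E^q_1(n),\dots,E^q_n(n)\rangle$, so $\C[Y_0]=\C[x,q]/\langle E^q_1(n),\dots,E^q_n(n)\rangle$.

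First I would establish the basis and specialization properties. Because $E^q_i(m)|_{q=0}=e_i(m)$, setting $q=0$ degenerates $\C[Y_0]$ to the Borel presentation $\C[x]/\langle e_1(n),\dots,e_n(n)\rangle\cong H^*(\Fl_n)$, in which the ordinary Schubert polynomials $\Schub_w$ form a basis. By construction $\Schub_w^q\equiv\Schub_w\pmod q$, and $\C[Y_0]$ is a free graded $\C[q]$-module of rank $n!$ (transported from the $\C[q]$-freeness of $QH^*(\Fl_n)$ through Theorem \ref{T:GK}). A graded Nakayama argument then shows that the $[\Schub_w^q]$ form a free $\C[q]$-basis and specialize at $q=0$ to the classical basis. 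This settles the first two properties and shows that $\sigma^w\mapsto[\Schub_w^q]$ is at least a $\C[q]$-linear isomorphism.

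The substantial step is to upgrade this linear isomorphism to a ring isomorphism. It suffices to check compatibility with quantum multiplication by the divisor classes $\sigma^{s_i}$, since these generate $QH^*(\Fl_n)$ as a $\C[q]$-algebra: each $x_i$ is recovered as a difference of the divisor representatives $\Schub_{s_i}^q=x_1+\cdots+x_i$, so the images of the Borel generators lie in the subalgebra they span. Concretely, one must prove that in $\C[Y_0]$
$$
[\Schub_{s_i}^q]\,[\Schub_w^q]=\sum_u c_{w,u}^{\,i}\,[\Schub_u^q],
$$
where the $c_{w,u}^{\,i}$ are the structure constants of the quantum Monk formula in $QH^*(\Fl_n)$. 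This is a quantum Monk formula for the quantum Schubert polynomials themselves: it asserts that multiplying $\Schub_w^q$ by $x_1+\cdots+x_i$ and reducing modulo $\langle E^q_j(n)\rangle$ reproduces exactly the classical Monk terms together with the expected $q$-corrections.

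The main obstacle is precisely this last compatibility, and the heart of the matter is that the quantization $\mathcal{Q}\colon e_{i_1}(1)\cdots e_{i_{n-1}}(n-1)\mapsto E^q_{i_1}(1)\cdots E^q_{i_{n-1}}(n-1)$ is only defined as a linear map on the standard monomial basis and is not a priori multiplicative. One must show that straightening a product into this standard basis and then quantizing agrees, modulo $\langle E^q_j(n)\rangle$, with quantizing the factors first and multiplying; equivalently, that $\mathcal{Q}$ intertwines classical multiplication by $x_1+\cdots+x_i$ with quantum multiplication. The difficulty is to track how the quantum correction terms $q_{m-1}E^q_{i-2}(m-2)$ in the recursion for $E^q_i(m)$ interact with the straightening relations and match them term-by-term against the $q$-corrections in Monk's formula. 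Establishing this commutation of $\mathcal{Q}$ with the multiplication operators is the technical core of the proof.
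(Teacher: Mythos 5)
The paper does not actually prove this statement; it is imported verbatim from Fomin--Gelfand--Postnikov \cite{FGP}, so your proposal has to be measured against their argument rather than against anything internal to this paper. Your preparatory reductions are sound and do match the standard setup: expanding $\det(\lambda I - L(x,q))$ along the tridiagonal structure reproduces exactly the defining recursion for $E^q_i(m)$, so nilpotency of $L$ is equivalent to the vanishing of $E^q_1(n),\dotsc,E^q_n(n)$; Newton's identities in characteristic zero give $\langle H_k\rangle = \langle E^q_1(n),\dotsc,E^q_n(n)\rangle$; and the graded Nakayama argument correctly shows that the classes $[\Schub^q_w]$ form a $\C[q_1,\dotsc,q_{n-1}]$-basis specializing at $q=0$ to the classical Schubert basis.

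The proof is nevertheless incomplete, and you say so yourself: the entire content of the theorem is concentrated in the step you defer, namely that the quantization map $\mathcal{Q}$ --- defined only linearly, on the standard monomial basis $e_{i_1}(1)\dotsm e_{i_{n-1}}(n-1)$ --- intertwines multiplication by $x_1+\dotsb+x_i$ with quantum multiplication by $\sigma^{s_i}$ modulo $\langle E^q_j(n)\rangle$. Identifying the obstacle is not the same as overcoming it, and the route you gesture at (matching the correction terms $q_{m-1}E^q_{i-2}(m-2)$ term-by-term against the straightening relations and the $q$-corrections of Monk's rule) is not how \cite{FGP} proceed: their argument runs through the orthogonality theorem --- quantization preserves the residue (Poincar\'e) pairing, $\langle \mathcal{Q}f,\mathcal{Q}g\rangle_q=\langle f,g\rangle$ --- together with their quantum Monk formula, which is what makes the computation tractable. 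A second, quieter gap: even granting an algebraic Monk formula for the polynomials $[\Schub^q_w]$, your argument takes the structure constants $c^i_{w,u}$ of the geometric quantum Monk formula in $QH^*(\Fl_n)$ as known input; that is itself a nontrivial theorem that must be cited or proved, in addition to the Givental--Kim identification of $\sigma^{s_i}$ with $x_1+\dotsb+x_i$. As written, the proposal is a correct road map whose hardest segment is still unbuilt.
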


\section{Centralizer groups and homology of affine Grassmannian}
Let $\Gr = SL_n(\C((t)))/SL_n(\C[[t]])$ denote the affine Grassmannian of $G = SL(n)$.  Let $G^\vee = PGL(n)$ denote the Langlands dual of $G$.  Let $e = \sum_{i=1}^{n-1}e_i^\vee$ denote the principal nilpotent element
$$
e =
\left(\begin{array}{ccccc}
0&-1&&\vphantom{\ddots} \\
&0&-1&\vphantom{\ddots} \\
&&0&\ddots \\
&&&\ddots&-1 \\
&&&&0
\end{array}\right)
$$ in the Lie algebra $\geh^\vee$. Write
$$
X = G^\vee_e = \left\{\left( \begin{array}{cccccc} 1&h_1&h_2&h_3&\dotsm&h_{n-1} \\ 0&1&h_1&h_2&\dotsm&h_{n-2} \\[-1mm]
0&0&1&\ddots&\ddots&\vdots \\ 0&0&0&1&h_1&h_2 \\ 0&0&0&0&1&h_1\\ 0&0&0&0&0&1\end{array} \right)\right\} \subset G^\vee
$$
for the centralizer subgroup of $e$ in $G^\vee$.

The following result is due to Ginzburg \cite{Gin} and Peterson \cite{P}.
\begin{theorem}[\cite{Gin,P}]\label{T:GP} There are Hopf isomorphisms
$$
H_*(\Gr) \simeq \C[X] \simeq \Z[h_1,h_2,\ldots,h_{n-1}] = \Lambda_{(n)}
$$
\end{theorem}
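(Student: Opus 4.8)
The plan is to factor the statement into two isomorphisms of Hopf algebras, $H_*(\Gr)\simeq\C[X]$ and $\C[X]\simeq\Lambda_{(n)}$, and to treat them quite differently: the second is an elementary computation of a group law, while the first is the genuinely deep Ginzburg--Peterson identification, which I would invoke from \cite{Gin,P} rather than reprove.

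I would first dispatch $\C[X]\simeq\Lambda_{(n)}$ directly. Since $X$ is already presented in the setup as the group of unipotent upper-triangular Toeplitz matrices, I write a general element as $g=\sum_{i=0}^{n-1}h_i N^i$ with $h_0=1$, where $N$ is the shift matrix with $1$'s on the superdiagonal and $N^n=0$. The assignment $g\mapsto \sum_i h_i z^i$ then identifies $X$ with the group of principal units $1+z\,\C[z]/(z^n)$ under multiplication, since matrix multiplication of such Toeplitz matrices is exactly multiplication of truncated power series modulo $z^n$. As a variety $X\cong\mathbb{A}^{n-1}$, so $\C[X]=\C[h_1,\dots,h_{n-1}]$ as an algebra, and reading off the $k$-th coordinate of a product gives the comultiplication
$$\Delta(h_k)=\sum_{i+j=k}h_i\otimes h_j\qquad(h_0=1).$$
This is precisely the coproduct of the complete homogeneous symmetric functions, so sending $h_k\in\C[X]$ to the $k$-th complete homogeneous symmetric function is an isomorphism of bialgebras onto $\Lambda_{(n)}=\C[h_1,\dots,h_{n-1}]$; compatibility of counit (evaluation at $g=1$) and antipode (group inversion versus $S(h_k)=(-1)^k e_k$) is then automatic.

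The substance is $H_*(\Gr)\simeq\C[X]$, and this is where I expect the main obstacle. The input is that $\Gr$ is homotopy equivalent to the based loop group $\Omega SU(n)$, so the Pontryagin product together with the diagonal coproduct makes $H_*(\Gr;\C)$ a commutative, cocommutative, graded Hopf algebra; by Bott's theorem it is polynomial on $n-1$ even-degree generators, matching the dimension of both $X$ and $\Lambda_{(n)}$. Hence $\mathrm{Spec}\,H_*(\Gr)$ is a commutative affine group scheme, and the task is to identify it with the centralizer $G^\vee_e$. For this I would appeal to the geometric Satake equivalence, under which the category of $G(\mathcal{O})$-equivariant perverse sheaves on $\Gr$ is Tannakian with Tannaka group $G^\vee$ and fiber functor given by total cohomology. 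Ginzburg's argument produces a principal nilpotent $e\in\geh^\vee$ from the cohomological grading---the semisimple element of the principal $\mathfrak{sl}_2$ governing hard Lefschetz, with $e$ acting as cup product by the degree-two generator---and shows that the Pontryagin Hopf structure on $H_*(\Gr)$ is exactly that of the coordinate ring of $G^\vee_e$; Peterson obtains the same identification Lie-theoretically through the affine nilHecke ring. This geometric step is the crux and the one I would cite rather than reconstruct, after which composition with the computation above yields the stated chain of Hopf isomorphisms.
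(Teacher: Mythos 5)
Your proposal is correct and aligns with the paper, which states this result purely as a citation to Ginzburg and Peterson and offers no proof of its own; deferring the identification $H_*(\Gr)\simeq\C[X]$ to \cite{Gin,P} is exactly what the authors do. Your explicit verification of the elementary half --- identifying $X$ with units $1+z\C[z]/(z^n)$ so that the group law yields $\Delta(h_k)=\sum_{i+j=k}h_i\otimes h_j$, matching the coproduct on complete homogeneous symmetric functions --- is accurate and is the standard (unwritten) computation behind the second isomorphism.
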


\subsection{Explicit Schubert representatives}

The homology $H_*(\Gr, \C)$ has a $\C$-basis of affine Schubert classes $\{\xi_x \mid x \in \tS_n/S_n\}$ indexed by cosets of the symmetric group in the affine symmetric group $\tS_n$.  Recall that the {\it $k$-Schur functions} $s^{(k)}_\la$ \cite{LM,LLM} are labeled by $k$-bounded partitions: partitions $\la$ satisfying $\la_1 \leq k$.
The following theorem of the first author was conjectured by the second author.
\begin{theorem}[\cite{L}]\label{T:L}
Under the isomorphism of Theorem \ref{T:GP} we have $\xi_x \mapsto s_{b(x)}^{(k)}$, where the bijection $x \leftrightarrow b(x)$ between $\tS_n/S_n$ and $(n-1)$-bounded partitions is described in \cite{LLMS}.
\end{theorem}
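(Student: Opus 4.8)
The plan is to prove the basis identification by showing that, under the isomorphism of Theorem \ref{T:GP}, the Schubert basis $\{\xi_x\}$ obeys the same Pieri rule as the $k$-Schur functions, and then to invoke uniqueness. By the work of Lapointe and Morse \cite{LM,LLM}, the $k$-Schur functions (at $t=1$) are the unique basis $\{s^{(k)}_\lambda\}$ of $\Lambda_{(n)}$ indexed by $(n-1)$-bounded partitions with $s^{(k)}_\emptyset = 1$ satisfying the $k$-Pieri rule $h_r \cdot s^{(k)}_\lambda = \sum_\mu s^{(k)}_\mu$, where $\mu$ ranges over those $(n-1)$-bounded partitions whose associated $n$-core is obtained from that of $\lambda$ by adding a cylindric (weak-order) horizontal strip of size $r$. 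Iterating this rule expands each $h_\lambda = \prod_i h_{\lambda_i}$ unitriangularly, with respect to dominance on $(n-1)$-bounded partitions, in the $k$-Schur basis; hence any basis indexed the same way and satisfying the same rule must coincide with $\{s^{(k)}_\lambda\}$. It therefore suffices to establish the homology Pieri rule for the affine Schubert classes.

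To access the product on $H_*(\Gr)$ concretely, I would pass to Peterson's affine Fomin--Stanley subalgebra $\mathbb{B}$ of the affine nilCoxeter algebra $\mathbb{A}_0$ of $\tS_n$. Peterson's theorem identifies $H_*(\Gr)$ with $\mathbb{B}$ so that $\xi_x$ corresponds to the unique element $j_x \in \mathbb{B}$ whose expansion in the nilCoxeter basis $\{A_w\}$ has $0$-Grassmannian leading term $A_x$ and no other $0$-Grassmannian terms. The generators $h_r$ of $\Lambda_{(n)}$ correspond to the cyclically decreasing elements $\mathbf{h}_r$, each a sum of basis elements $A_w$ over cyclically decreasing $w \in \tS_n$ of length $r$; one checks these commute and generate $\mathbb{B}$, recovering the isomorphism of Theorem \ref{T:GP} compatibly with the Hopf structure.

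The heart of the matter is then the explicit computation of the product $\mathbf{h}_r \cdot j_x$ in $\mathbb{B}$ and the extraction of its $j$-basis coefficients. I would compute $\mathbf{h}_r A_x$ directly in $\mathbb{A}_0$ using the multiplication of cyclically decreasing elements against the nilCoxeter basis, and then show that, after translating $x$ and the resulting indices into $n$-cores and $(n-1)$-bounded partitions via the bijection $b$ of \cite{LLMS}, the surviving coefficients are exactly the $0/1$ multiplicities of the cylindric $k$-Pieri rule. Matching the two Pieri rules and applying the uniqueness argument above yields $\xi_x \mapsto s^{(k)}_{b(x)}$.

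The main obstacle I anticipate is precisely this last comparison. Controlling the $j$-basis expansion of $\mathbf{h}_r A_x$ requires understanding which non-Grassmannian terms are produced and how cyclically decreasing factors interact with reduced words in $\tS_n$, and then verifying, core by core, that the surviving Grassmannian terms are counted by weak-order horizontal strips. Arranging the bijection $b$ to be simultaneously compatible with the nilCoxeter combinatorics and the $k$-core combinatorics, so that appending a cyclically decreasing factor on the algebra side corresponds to adding a cylindric horizontal strip on the symmetric-function side, is where the real work lies.
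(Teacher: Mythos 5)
This theorem is not proved in the paper: it is imported from \cite{L} (the bracketed attribution in the theorem header is the entirety of the paper's ``proof''), so there is no internal argument to compare yours against. That said, your outline is essentially the proof architecture that \cite{L} and \cite{LLMS} actually use: realize $H_*(\Gr)$ as Peterson's affine Fomin--Stanley subalgebra $\mathbb{B}$ of the affine nilCoxeter algebra, send $h_r$ to the sum $\mathbf{h}_r$ of cyclically decreasing elements, characterize $\xi_x$ by the $j$-basis leading-Grassmannian-term property, and pin the basis down by matching a Pieri rule against the Lapointe--Morse $t=1$ characterization of $k$-Schur functions (your unitriangularity/uniqueness step is correct, provided you quote the $k$-Pieri characterization of \cite{LM} rather than the original $t$-dependent definition of \cite{LLM}). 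The one place where your plan understates the difficulty is the step you yourself flag: establishing that $\mathbf{h}_r\cdot j_x$ expands with $0/1$ coefficients indexed exactly by weak (cylindric) horizontal strips is the weak Pieri rule, which is the main content of \cite{LLMS} and requires the full $n$-core/bounded-partition dictionary; note also that \cite{L} itself sidesteps a direct homology computation by arguing dually in cohomology with affine Stanley symmetric functions and transporting the result across the Hopf pairing, so your direct-homology route is a legitimate but somewhat harder variant of the published argument rather than a shortcut.
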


For two partitions $\la,\mu$, we let $\la \cup \mu$ be the partition obtained by taking the union of parts of $\la$ and $\mu$.
The {\it $k$-rectangles} $R_i$ play a special role in the theory of $k$-Schur functions because of the following factorization result of Lapointe and Morse \cite{LM}:
\begin{theorem}\label{T:LM}
$$s^{(k)}_{\la \cup R_i} = s^{(k)}_\la s_{R_i}.$$
\end{theorem}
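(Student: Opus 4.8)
The plan is to prove the factorization geometrically, transporting both sides into $H_*(\Gr)$ by Theorem \ref{T:L} and using the ring structure supplied by the Hopf isomorphism of Theorem \ref{T:GP}. Under these identifications $s^{(k)}_\mu \mapsto \xi_{x_\mu}$, where $x_\mu \in \tS_n/S_n$ is the coset with $b(x_\mu)=\mu$, and multiplication of symmetric functions corresponds to the Pontryagin product on $H_*(\Gr)$ coming from the loop-group structure on $\Gr$. The assertion then becomes the statement that the Pontryagin product of affine Schubert classes satisfies $\xi_{x_\lambda}\cdot \xi_{x_{R_i}} = \xi_{x_{\lambda\cup R_i}}$, with no other terms.

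The first step is to pin down the distinguished cosets $x_{R_i}$. Here the key input is that the affine Grassmannian element $w_{R_i}$ representing $x_{R_i}$ is the one associated to the translation $t_{\omega_i^\vee}$ by the $i$-th fundamental coweight (an element of the extended affine Weyl group), so that $\ell(w_{R_i}) = i(n-i) = |R_i|$. Because the bijection $b$ matches the number of boxes of a bounded partition with the length of its affine Grassmannian element, the degrees automatically match: $|\lambda\cup R_i| = |\lambda| + |R_i|$, whence $\ell(w_{\lambda\cup R_i}) = \ell(w_\lambda)+\ell(w_{R_i})$. I would then check combinatorially that the operation $\mu \mapsto \mu\cup R_i$ of inserting $n-i$ parts equal to $i$ corresponds on the affine side to multiplying $w_\mu$ by the translation $t_{\omega_i^\vee}$ and passing back to the minimal coset representative.

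The principal and hardest step is to show that multiplication by the translation class $\xi_{x_{R_i}}$ is clean: the product is the single Schubert class $\xi_{x_{\lambda\cup R_i}}$ with coefficient $1$, and all lower structure constants vanish. By positivity of the Pontryagin structure constants together with the degree count above, the product is a nonnegative integer combination of classes $\xi_{x_\mu}$ with $|\mu| = |\lambda| + |R_i|$; the work is to eliminate every $\mu \neq \lambda\cup R_i$. I expect the cleanest route is through Peterson's realization \cite{P} of $H_*(\Gr)$ inside the affine nilHecke ring, in which the element attached to a translation by a dominant coweight acts on the Schubert basis by an honest shift rather than a genuine sum; equivalently, left translation on $\Gr$ by the corresponding loop carries the Schubert stratification of $X_{w_\lambda}$ isomorphically onto that of $X_{w_{\lambda\cup R_i}}$, forcing the product map to be cellular of degree one onto its image. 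A purely combinatorial alternative, closer to Lapointe and Morse's original argument, would instead derive the identity from the $k$-tableau characterization of $s^{(k)}_\mu$, by showing that every $k$-tableau of shape $\lambda\cup R_i$ factors uniquely into a $k$-tableau of shape $\lambda$ together with the unique filling of the rectangle $R_i$; but within the present framework the geometric argument is the more natural one.
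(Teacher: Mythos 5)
The paper does not actually prove Theorem \ref{T:LM}: it is imported verbatim from Lapointe and Morse \cite{LM}, where it is established combinatorially via the $k$-tableau description of $k$-Schur functions. Your geometric argument is therefore a genuinely different route, and it is a viable one---it is essentially the content of Magyar's factorization of affine homology Schubert classes \cite{M} (alluded to in a remark after Theorem \ref{T:main}) and of Lemma \ref{L:factor} in the appendix. Three cautions. First, the step you flag as hardest---that multiplication by the translation class is ``clean''---is exactly the statement $\xi_w = \xi_{\tau_j w d_i \tau_j^{-1}}\,\xi_{d_i}$ of Lemma \ref{L:factor}, equivalently the well-definedness of Peterson's map \eqref{E:Petiso} (the class $\xi_{wt_\la}\xi_{t_\mu}^{-1}$ must depend only on $\la-\mu$); you identify the right mechanism (translations act by a shift in Peterson's nilHecke-ring model) but leave it as an expectation, and you suppress both the conjugation by the special automorphism $\tau_j$ needed to return to a non-extended minimal coset representative and the sign: the relevant translation is by the antidominant $-\omega_i^\vee$, i.e.\ the element $d_i$, not $t_{\omega_i^\vee}$. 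Second, the identification of $\mu\mapsto\mu\cup R_i$ with right multiplication by $d_i$ under the bijection $b$ of \cite{LLMS} is asserted but not carried out; it is a genuine computation of the same flavour as Lemma \ref{L:law}, not an automatic consequence of the degree count $|\la\cup R_i|=|\la|+|R_i|$ (positivity plus degrees only bounds the support of the product, as you note). Third, since you transport the identity through Lam's Theorem \ref{T:L}, you should confirm that \cite{L} does not itself invoke the rectangle factorization---it does not (it rests on the Pieri-rule characterization of $k$-Schur functions), so there is no circularity, but the point deserves a sentence. Once these are filled in, your approach buys a uniform geometric explanation that should extend to other types; the Lapointe--Morse proof buys independence from the affine Schubert calculus machinery.
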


\section{Kostant's solution to the Toda lattice}
Since $\C[X] \simeq \C[h_1,h_2,\ldots,h_{n-1}]$, via the Jacobi-Trudi formula
$s_\la = \det(h_{\la_i-i+j})$ the Schur functions $s_{R_i}$ can be considered as polynomial functions on $X$.  We define
the Zariski-open set $X^\circ\subset X$ by
$$
X^\circ = \{g \in X \mid s_{R_i}(g) \neq 0 \mbox{ for $i \in [1,n-1]$}\}.
$$

Kostant \cite{Kos} solves  the nilpotent Toda lattice by
\begin{theorem}\label{T:K}
There is an isomorphism $\Psi:X^\circ \to Y_0^\circ$ such that the induced map
$\Psi^*:\C[Y_0^\circ] \to \C[X^\circ]$ is given by $\Psi^* = \Phi$.
\end{theorem}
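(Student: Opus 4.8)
The plan is to pass to coordinate rings and prove the dual statement: that $\Phi$ descends to a well-defined isomorphism of $\C$-algebras $\C[Y_0^\circ]\to\C[X^\circ]$. By the anti-equivalence between affine varieties and their coordinate rings, such an isomorphism produces a unique isomorphism of varieties $\Psi\colon X^\circ\to Y_0^\circ$ with $\Psi^*=\Phi$ (note the reversal of direction), so the whole theorem reduces to two checks. First, $\Phi$ must be well defined: since $\C[Y_0^\circ]=(\C[x,q]/\langle H_k\rangle)[q_i^{-1}]$ and the images $\Phi(q_i)=s_{R_{i-1}}s_{R_{i+1}}/s_{R_i}^2$ are manifestly invertible in $\C[X^\circ]=\Lambda_{(n)}[s_{R_1}^{-1},\dots,s_{R_{n-1}}^{-1}]$, the only thing to verify is that $\Phi(H_k)=0$ for all $k$. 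Equivalently, the tridiagonal matrix $\Phi(L)$ obtained by substituting $\Phi(x_i)=s_{R'_i}/s_{R_i}-s_{R'_{i-1}}/s_{R_{i-1}}$ on the diagonal and $\Phi(q_i)$ on the subdiagonal must be nilpotent over $\Frac(\Lambda_{(n)})$, i.e.\ have characteristic polynomial $\lambda^n$. Second, $\Phi$ must be bijective.

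For the bijectivity I would construct the inverse, which is exactly the content of Kostant's solution of the Toda lattice. A matrix $L\in Y_0^\circ$ is a \emph{regular} nilpotent (being Hessenberg with nonzero subdiagonal entries $q_i$, it is nonderogatory), hence conjugate to the principal nilpotent $e$; normalizing the conjugator so that it lands in $X^\circ$ recovers a point $g$, whose matrix entries return the generators $h_i$. Carrying this out symbolically, one sees that the Toda data are governed by a single sequence of ``tau functions'' $\tau_i$ in the standard Hirota form $q_i=\tau_{i-1}\tau_{i+1}/\tau_i^2$ together with $x_1+\cdots+x_i=\sigma_i/\tau_i$ for a companion sequence $\sigma_i$. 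The heart of the matter is then combinatorial: one identifies $\tau_i$ with the rectangle Schur function $s_{R_i}(g)$ and $\sigma_i$ with $s_{R'_i}(g)$. Using that $g\in X$ is the ``Toeplitz'' unipotent matrix with entries $g_{ab}=h_{b-a}$, the Jacobi--Trudi determinant identifies $s_{R_i}(g)$ with the top-right $(n-i)\times(n-i)$ minor of $g$, which is precisely the flag minor arising as Kostant's tau function; the truncated shape $R'_i$ is exactly what is needed to produce the diagonal entries $\sigma_i/\tau_i$.

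I expect the nilpotency verification to be the main obstacle, since this is where the special role of the rectangular shapes enters. Concretely, I would expand the characteristic polynomial of $\Phi(L)$ using the three-term recurrence $D_m(\lambda)=(\lambda-\Phi(x_m))D_{m-1}(\lambda)+\Phi(q_{m-1})D_{m-2}(\lambda)$ for its leading principal minors, clear denominators against the $s_{R_i}$, and show the result collapses to $\lambda^n$. The precise reason for this collapse is a Desnanot--Jacobi (Dodgson condensation) identity, equivalently a Pl\"ucker relation, among the rectangle minors $s_{R_{i-1}},s_{R_i},s_{R_{i+1}}$ and their corner-truncations $s_{R'_i}$; the form $q_i=\tau_{i-1}\tau_{i+1}/\tau_i^2$ is exactly the bilinear relation that makes the recurrence telescope. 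A more conceptual alternative, avoiding the determinant bookkeeping, is to exhibit $\Phi(L)$ as a gauge transformation (a conjugation built from the Gauss factorization of a suitable translate of $g$) applied to $e$: nilpotency is then inherited from $e$, and the remaining task is to confirm that this conjugation lands the matrix in tridiagonal Toda form with the claimed Schur-function entries. Either way, once $\Phi(H_k)=0$ and the inverse are established, matching $\Phi$ with $\Psi^*$ on the generators $x_i,q_i$ completes the proof.
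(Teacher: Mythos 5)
The paper offers no proof of this theorem: it is imported wholesale from Kostant \cite{Kos}, and the surrounding text only records the construction of $\Psi$ (for $g\in X^\circ$, take the unique lower unitriangular $n_-(g)$ making $gn_-(g)$ vanish above the antidiagonal and set $\Psi(g)=n_-(g)^{-1}\,e\,n_-(g)$), together with a Remark giving the entries of $n_-(g)^{\pm 1}$ as ratios of Schur functions. Your ``more conceptual alternative'' is precisely this construction --- your Gauss factorization of a translate of $g$ is the factorization $w_0\,g = U\,n_-(g)^{-1}$ with $U$ upper triangular, $w_0$ realized as the antidiagonal permutation matrix --- so you are following the same route as the source rather than a genuinely different one, and the key points of your outline are sound: nilpotency of $\Psi(g)$ is free by conjugation from $e$; a matrix in $Y_0^\circ$ is nonderogatory, which yields the inverse map; and the identification of Kostant's tau functions with rectangle Schur functions is correct, since the top-right $(n-i)\times(n-i)$ minor of the Toeplitz matrix $g_{ab}=h_{b-a}$ equals $\det(h_{i-a+b})_{a,b=1}^{n-i}=s_{R_i}(g)$ by Jacobi--Trudi, with the truncated rectangles $R_i'$ supplying the numerators of the diagonal entries. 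What you defer is the one genuinely computational step: that $n_-(g)^{-1}\,e\,n_-(g)$ really is tridiagonal of Toda form with subdiagonal entries $s_{R_{i-1}}s_{R_{i+1}}/s_{R_i}^2$ and partial traces $s_{R_i'}/s_{R_i}$ --- equivalently, your Desnanot--Jacobi telescoping of the characteristic polynomial. You name two ways to carry this out but execute neither, so the proposal is a correct plan rather than a complete argument; since the paper itself cites \cite{Kos} for exactly these computations, this is not a defect relative to the paper, but it is the part that a self-contained proof would have to write out.
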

Thus coordinates on $X$ can be considered as angle coordinates for the nilpotent Toda leaf.  The map $\Psi$ is constructed as follows: for $g \in X^\circ$, find a lower unitriangular matrix $n_-(g)$ so that $gn_-(g)$ has the form
$$
g n_-(g) = \left( \begin{array}{cccc} 0&0&0&* \\ 0&0&*&* \\
0&*&*&* \\ *&*&*&*\end{array} \right).
$$
Then $\Psi(g) = n_-^{-1}(g) e n_-(g)$.  The formula we give in Theorem \ref{T:main} for $\Phi$ is a symmetric function translation of Kostant's description of $\Psi(g)$.

\begin{remark}
In fact, $n_-(g)$ has $(i,j)$-th entry
$(-1)^{i-j} (e_{i-j}^\perp\cdot s_{R_j})/s_{R_j}$ for $i>j$, and $n_-(g)^{-1}$ is lower unitriangular with $(i,j)$
entry $(h_{i-j}^\perp \cdot s_{R_{i-1}})/s_{R_{i-1}}$ for $i>j$.
\end{remark}

Composing $\Psi^*$ with Theorems \ref{T:GK} and \ref{T:GP} we obtain an isomorphism $QH^*(\Fl_n)[q_i^{-1}] \cong H_*(\Gr)[s_{R_i}^{-1}]$.  We shall now discuss the behavior of Schubert classes under this isomorphism.

\section{Isomorphisms in terms of Schubert classes}
\label{S:iso}

\subsection{Peterson's isomorphism}
We first develop some notation allowing us to label affine Schubert classes with extended affine symmetric group elements.\footnote{We could alternatively work with $H_*(\Gr_{PGL(n)})$, but it is simpler to always use $\Gr = \Gr_{SL(n)}$ throughout.}  Our notation for affine Weyl groups mostly follows that in \cite{LS} (see also Appendix A).  For explicit affine symmetric group notation, we refer the reader to \cite{LLMS}.

Let $Q^\vee$ and $P^\vee$ denote the coroot lattice and coweight lattice of the root system $A_{n-1}$.  Let $\hS_n$ denote the extended affine symmetric group, so that $\hS_n \cong \Z/n\Z \ltimes \tS_n \cong W \ltimes P^\vee$.  Let $\hS_n^0$ and $\tS_n^0$ denote the minimum length coset representatives  in $\hS_n/S_n$  and $\tS_n/S_n$ respectively.  An affine permutation $x \in \hS_n$ can be thought of as a bijection $x: \Z \to \Z$ satisfying the periodicity condition $x(i+n) = x(i)+n$, and is determined by the window $[x(1),x(2),\ldots,x(n)]$.  The two affine permutations  $[x(1),x(2),\ldots,x(n)]$ and $[x(1)+n,x(2)+n,\ldots,x(n)+n]$ are considered identical.
Given $(\la_1,\la_2,\ldots,\la_n) \in P^\vee = \Z^n/(1,1,\ldots,1)$, the translation element $t_\la$ has window notation $t_\la = [1 + n\la_1,2 + n \la_2,\ldots,n+n\la_n]$.  Some distinguished elements in $P^\vee$ are the fundamental coweights $\omega_i^\vee = e_1+e_2+\cdots +e_i$, and the simple coroots $\alpha_i^\vee = e_i-e_{i+1}$, where $e_i \in \Z^n/(1,1,\ldots,1)$ denotes the standard basis vectors.
\par

For $x \in \hS_n$, we write $x = zy$ where $z \in \Z/n\Z$ and $y \in \tS_n^0$.  Then we define $\xi_x = \xi_y$ in $H_*(\Gr)$.

For $i \in [1,n-1]$ let $w_0^{\omega_i}$ denote the longest minimal length coset representative of $S_n/(S_i \times S_{n-i})$.  In one line notation, $w_0^{\omega_i}$ is given by $[n-i+1,n-i+2,\ldots,n,1,2,\ldots,n-i]$.  We then set $q_{\omega_i^\vee} = \sigma^{w_0^{\omega_i}}$.  With this definition one obtains $q_\lambda$ for each $\lambda \in P^\vee$, satisfying $q_{\lambda+\mu} = q_{\lambda} q_\mu$.    See Appendix \ref{S:extend} for further details.

In his study of geometric models for quantum cohomology of partial flag varieties, Peterson relates the quantum cohomology rings with the homology of affine Grassmanians.  Peterson's statement \cite{P} and our proof in \cite{LS} labels Schubert classes only by non-extended affine Weyl group elements.  In Appendix \ref{S:extend}, we show that the notation can be included to include extended affine Weyl group elements.

\begin{theorem}\label{T:P}
There is an isomorphism $QH^*(\Fl_n)[q_i^{-1}] \to H_*(\Gr)[\xi_{t_\lambda}^{-1} \mid \lambda \in P^\vee]
$ given by
$$
\sigma^w q_{\mu-\lambda} \longmapsto \xi_{wt_\mu} \xi_{t_\lambda}^{-1}
$$
where $\mu,\lambda \in P^\vee$ are antidominant.
\end{theorem}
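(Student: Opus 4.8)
The plan is to take the abstract ring isomorphism already assembled in the previous sections and then match Schubert classes, reducing the statement to Peterson's non-extended isomorphism plus a combinatorial extension. First I would fix the isomorphism $\Theta\colon QH^*(\Fl_n)[q_i^{-1}]\to H_*(\Gr)[s_{R_i}^{-1}]$ induced by $\Phi=\Psi^*$ through the composition of Theorems \ref{T:GK}, \ref{T:GP} and \ref{T:K}, and check that its target is the ring appearing in the statement. This amounts to showing that the multiplicative set generated by $\{s_{R_1},\dots,s_{R_{n-1}}\}$ agrees, up to units, with $\{\xi_{t_\lambda}\mid\lambda\in P^\vee\}$. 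For $\lambda$ antidominant the minimal coset representative of $t_\lambda$ is a union of $k$-rectangles, so by the factorization Theorem \ref{T:LM} the class $\xi_{t_\lambda}$ is a monomial in the $s_{R_i}$; conversely each $s_{R_i}$ is itself such a class. Hence $H_*(\Gr)[s_{R_i}^{-1}]=H_*(\Gr)[\xi_{t_\lambda}^{-1}\mid\lambda\in P^\vee]$, and since $\Phi$ sends the $q_i$ to units of the form $s_{R_{i-1}}s_{R_{i+1}}/s_{R_i}^2$, the two localizations correspond under $\Theta$.

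Next I would invoke Peterson's theorem in the non-extended form proved in \cite{P,LS}: for $\mu,\lambda\in Q^\vee$ antidominant one has $\Theta(\sigma^w q_{\mu-\lambda})=\xi_{wt_\mu}\xi_{t_\lambda}^{-1}$. Because $q$ is multiplicative and $\Theta$ is a ring map, it is enough to establish the single family of identities $\Theta(\sigma^w q_\mu)=\xi_{wt_\mu}$ for $w\in S_n$ and $\mu$ antidominant; the general two-parameter formula then follows by dividing by the $w=\id$ instance $\Theta(q_\lambda)=\xi_{t_\lambda}$. For $\mu\in Q^\vee$ these identities are exactly the cited result.

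The remaining work is to pass from $Q^\vee$ to $P^\vee$. Here I would use the splitting $\hS_n\cong\Omega\ltimes\tS_n$ with $\Omega\cong P^\vee/Q^\vee\cong\Z/n\Z$, together with the convention $\xi_x=\xi_y$ for $x=zy$, $z\in\Omega$, $y\in\tS_n^0$. Both sides are multiplicative in $\mu$: on the quantum side $q_{\mu+\nu}=q_\mu q_\nu$ with $q_{\omega_i^\vee}=\sigma^{w_0^{\omega_i}}$, and on the affine side $\xi_{t_{\mu+\nu}}=\xi_{t_\mu}\xi_{t_\nu}$ by Theorem \ref{T:LM}. Since $P^\vee$ is generated by $Q^\vee$ together with $\omega_1^\vee$, it suffices to verify one coset generator, namely that $q_{\omega_i^\vee}=\sigma^{w_0^{\omega_i}}$ is sent to $\xi_{t_{\omega_i^\vee}}$. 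I would do this by writing $\omega_i^\vee=0-(-\omega_i^\vee)$ with $-\omega_i^\vee$ antidominant, applying the formula to get $\Theta(q_{\omega_i^\vee})=\xi_{t_{-\omega_i^\vee}}^{-1}$, observing that multiplicativity forces this to equal $\xi_{t_{\omega_i^\vee}}$, and matching the finite data of $t_{\omega_i^\vee}$ against the Grassmannian permutation $w_0^{\omega_i}$ via the combinatorics of \cite{LLMS} (consistent with Theorem \ref{T:main} evaluated at $w_0^{\omega_i}$, whose descent set is $\{i\}$).

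The hard part will be precisely this last bookkeeping of coset representatives for extended affine elements. For $\mu\in P^\vee\setminus Q^\vee$ the product $wt_\mu$ lies in $\hS_n\setminus\tS_n$, so $\xi_{wt_\mu}$ is defined only after stripping off the rotation factor $z\in\Omega$; one must check that this stripping is compatible with the factorization $\xi_{wt_\mu}=\xi_{wt_{\mu_0}}\xi_{t_\nu}$ ($\mu=\mu_0+\nu$, $\mu_0\in Q^\vee$) used above, and that it meshes with the antidominance normalization so that the clean formula of the theorem holds with no correction term. Making all of these conventions agree is the delicate point, and is the business of Appendix \ref{S:extend}; by contrast, the abstract isomorphism and the non-extended class comparison are supplied wholesale by Kostant's Theorem \ref{T:K} and by \cite{P,LS}.
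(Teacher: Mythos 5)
Your overall route coincides with the paper's: Theorem \ref{T:P} is not derived from the Toda lattice but is quoted from \cite{P,LS} in its non-extended ($\mu,\lambda\in Q^\vee$, i.e.\ $\tQ$) form, and the passage to $P^\vee$ is pure bookkeeping with special automorphisms and minimal coset representatives --- exactly the content of Appendix \ref{S:extend} (the definitions $\xi_{zy}:=\xi_y$, $q_\mu:=q_\lambda q_{\omega_i^\vee}$ with $q_{\omega_i^\vee}=\sigma^{w_0^{\omega_i}}$, and Lemmas \ref{L:coset}, \ref{L:d}, \ref{L:factor}). Your identification of the multiplicative sets via ``antidominant translations $\leftrightarrow$ unions of $k$-rectangles'' and your reduction to a single coset generator of $P^\vee/Q^\vee$ are consistent with that. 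The cleanest way to do your final check is the identity $w_0^{\omega_i}=\tau_i t_{\omega_i^\vee}$ (equation \eqref{E:tau}), which immediately gives $\xi_{w_0^{\omega_i}t_\beta}=\xi_{\tau_i t_{\omega_i^\vee+\beta}}=\xi_{t_{\omega_i^\vee+\beta}}$ for antidominant $\beta\in\tQ$.

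There is, however, one genuine confusion at the start. You take $\Theta$ to be the isomorphism induced by $\Phi=\Psi^*$ through Theorems \ref{T:GK}, \ref{T:GP}, \ref{T:K}, and then assert that the non-extended Peterson theorem says $\Theta(\sigma^w q_{\mu-\lambda})=\xi_{wt_\mu}\xi_{t_\lambda}^{-1}$. That is not what \cite{P,LS} prove: they construct an isomorphism \emph{defined} by that formula and say nothing about the Toda-lattice map. That the two isomorphisms agree is precisely the later Proposition ``$\Phi=\Phi'$'' in Section \ref{S:iso}, whose proof uses Proposition \ref{P:QSchurimage} and presupposes Theorem \ref{T:P}; so, read literally, your second paragraph is either a misattribution or circular. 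The fix is to delete the Toda lattice from this proof and let $\Theta$ simply be Peterson's map. A second, smaller gap: the reduction from $P^\vee$ to $Q^\vee$ needs $\xi_{wt_{\mu_0}}\xi_{t_\nu}=\xi_{wt_{\mu_0+\nu}}$ for general $w$, not just $w=\id$. Theorem \ref{T:LM} together with Theorem \ref{T:L} only gives $\xi_x\xi_{d_i}=\xi_{b^{-1}(b(x)\cup R_i)}$; one must still identify $b^{-1}(b(x)\cup R_i)$ with the minimal coset representative of $xt_{-\omega_i^\vee}$, which is Lemma \ref{L:factor} and rests on the length-additivity/antidominance control of Lemma \ref{L:coset}. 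You correctly flag this coset bookkeeping as the delicate point, but it should be attributed to the appendix lemmas rather than to Theorem \ref{T:LM}.
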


\subsection{$k$-rectangles}
Define $d_i \in \tS_n^0$ to be the unique affine Grassmannian element in the same left $\Z/n\Z$ coset as $t_{-\omega_i^\vee}$.  Equivalently, $d_i$ is obtained by affine Dynkin rotation of $w_0^{\omega_i}$.  There is a bijection $w\mapsto b(w)$ \cite{LLMS} between $\tS_n^0$ and the set of $(n-1)$-bounded partitions.  Under this bijection we have $d_i \mapsto R_i$.

\begin{example} Let $n=5$ and $i=2$ so that $i^*:=n-i=3$.
Then $w_0^{\omega_3} = s_2s_1s_3s_2s_4s_3$ which in one-line notation is
$[3,4,5,1,2]$. and $d_2 = s_4s_3s_0s_4s_1s_0$.  The reduced word for $d_i$ is obtained by reading the entries from the French tableau of shape $R_i$ obtained by placing $c-r \mod
n$ into the cell in the $r$-th row and $c$-th column.
\begin{equation*}
\tableau[sby]{3&4\\4&0\\0&1}.
\end{equation*}
\end{example}

By Theorem \ref{T:L}, the isomorphism $H_*(\Gr_{SL_n}) \cong \Z[h_1,\dotsc,h_{n-1}]$ of Theorem \ref{T:GP} sends 
$\xi_w$ to
the $k$-Schur function $s_{b(w)}^{(n-1)}$. Therefore $\xi_{d_i} \mapsto s_{R_i}$
under the isomorphism of Theorem \ref{T:GP}.
This isomorphism extends to a localized isomorphism
\begin{align}\label{E:iso}
  H_*(\Gr_{SL_n})[\xi_{t_\la}^{-1}\mid \la\in \tQ]\cong
  \Z[h_1,\dotsc,h_{n-1}][s_{R_1}^{-1},\dotsc,s_{R_{n-1}}^{-1}]
\end{align}
under which $\xi_{t_{\omega_i^\vee}} =
\xi_{-t_{\omega_i^\vee}}^{-1}\mapsto 1/s_{R_i}$.  We denote by $\Phi'$
the isomorphism
\begin{align*}
QH^*(\Fl_n)[q_1^{-1},q_2^{-1},\ldots,q_{n-1}^{-1}] &\to
\Z[h_1,\dotsc,h_{n-1}][s_{R_1}^{-1},\dotsc,s_{R_{n-1}}^{-1}]
\end{align*}
given by composing Theorem \ref{T:P} with \eqref{E:iso}.
By the above discussion we have $\Phi'(q_{\omega_i^\vee}) = 1/s_{R_i}$.

Recall that $\Phi$ is defined before Theorem \ref{T:main}.
\begin{proposition} We have $\Phi=\Phi'$, where both are considered as algebra maps from $QH^*(\Fl_n)[q_i^{-1}]$ to $\Z[h_1,\dotsc,h_{n-1}][s_{R_i}^{-1}]$.
\end{proposition}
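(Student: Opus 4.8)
\emph{Proof plan.} The plan is to use that $\Phi$ and $\Phi'$ are both $\Z$-algebra homomorphisms out of $QH^*(\Fl_n)[q_i^{-1}]$ with the same target, so that it suffices to check agreement on a generating set. By the Givental--Kim presentation (Theorem~\ref{T:GK}) the ring $QH^*(\Fl_n)$ is generated by $x_1,\dotsc,x_n$ and $q_1,\dotsc,q_{n-1}$; moreover the Hamiltonian $H_0=\mathrm{tr}(L)=x_1+\dotsb+x_n$ vanishes on the nilpotent leaf $Y_0$, so each $x_i$ is a difference of consecutive Schubert divisors $\sigma^{s_j}=\Schub^q_{s_j}=x_1+\dotsb+x_j$. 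Hence $QH^*(\Fl_n)[q_i^{-1}]$ is generated as a $\Z$-algebra by the divisors $\sigma^{s_1},\dotsc,\sigma^{s_{n-1}}$ together with the units $q_1^{\pm1},\dotsc,q_{n-1}^{\pm1}$, and I only need to match the two maps on these.

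The quantum parameters are the easy case. In type $A_{n-1}$ one has $\alpha_i^\vee=2\omega_i^\vee-\omega_{i-1}^\vee-\omega_{i+1}^\vee$ in $P^\vee$, which together with $q_\lambda q_\mu=q_{\lambda+\mu}$ and $q_i=q_{\alpha_i^\vee}$ gives the identity $q_i=q_{\omega_i^\vee}^{2}\,q_{\omega_{i-1}^\vee}^{-1}q_{\omega_{i+1}^\vee}^{-1}$ in the localized ring. Since $\Phi'(q_{\omega_j^\vee})=1/s_{R_j}$ was established above (with the conventions $s_{R_0}=s_{R_n}=1$ and $q_{\omega_0^\vee}=q_{\omega_n^\vee}=1$), this yields $\Phi'(q_i)=s_{R_{i-1}}s_{R_{i+1}}/s_{R_i}^{2}$, which is exactly $\Phi(q_i)$ by the definition of $\Phi$. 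So the two maps agree on every $q_i$.

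It remains to treat the divisors, where $\Phi(\sigma^{s_i})=\Phi(x_1+\dotsb+x_i)=s_{R'_i}/s_{R_i}$ by definition, so the task is to show $\Phi'(\sigma^{s_i})=s_{R'_i}/s_{R_i}$. For this I would apply Peterson's isomorphism (Theorem~\ref{T:P}) at the antidominant coweight $\mu=-\omega_i^\vee$: writing $\sigma^{s_i}=\sigma^{s_i}q_{\mu-\mu}$, it maps to $\xi_{s_it_\mu}\,\xi_{t_\mu}^{-1}$, which I then push through the localized isomorphism \eqref{E:iso}. The denominator is immediate: $t_{-\omega_i^\vee}$ lies in the same $\Z/n\Z$-coset as $d_i$, so $\xi_{t_{-\omega_i^\vee}}=\xi_{d_i}\mapsto s_{R_i}$. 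For the numerator I would compute the window of the affine permutation $s_it_{-\omega_i^\vee}\in\hS_n$, strip off the length-zero rotation in $\Z/n\Z$ to obtain its Grassmannian representative $y\in\tS_n^0$, and read off $b(y)$ via the bijection of \cite{LLMS}; the expected outcome is that left multiplication by $s_i$ deletes the outer corner of the rectangle, i.e.\ $b(y)=R'_i$. Because $R'_i=(i^{\,n-i-1},i-1)$ satisfies $(R'_i)_1+\ell(R'_i)-1=n-1=k$, it lies in the range where the $k$-Schur function coincides with the ordinary Schur function, so $\xi_{s_it_{-\omega_i^\vee}}\mapsto s^{(k)}_{R'_i}=s_{R'_i}$ and the ratio is $s_{R'_i}/s_{R_i}$, as required.

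The main obstacle is precisely this last combinatorial identification: tracking $s_it_{-\omega_i^\vee}$ through the extended affine symmetric group, separating its $\Z/n\Z$ rotation from the affine Grassmannian factor, and proving through the bijection $b$ of \cite{LLMS} (with the Lapointe--Morse factorization, Theorem~\ref{T:LM}, to handle the rectangle $R_i$) that the corner-deletion really produces $R'_i$. One should also verify that $\mu=-\omega_i^\vee$ lies in the range of validity of Theorem~\ref{T:P}, passing to a deeper antidominant $\mu$ if length-additivity of $s_it_\mu$ is needed. Everything else reduces to the formal generator argument and the coweight bookkeeping of the second paragraph.
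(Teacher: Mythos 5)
Your plan is correct and follows essentially the same route as the paper: reduce to the algebra generators $q_i$ and $x_1+\cdots+x_i$, handle $q_i$ via $\alpha_i^\vee = 2\omega_i^\vee-\omega_{i-1}^\vee-\omega_{i+1}^\vee$ and $\Phi'(q_{\omega_j^\vee})=1/s_{R_j}$, and compute $\Phi'(\sigma^{s_i})$ through Peterson's isomorphism at $t_{-\omega_i^\vee}$. The combinatorial identification you flag as the main obstacle is precisely the $\la=(1)$, $m=i$ case of Proposition~\ref{P:QSchurimage} (note $s_i=w_{(1),i}$ and $(1)^\vee=R'_i$), which the paper disposes of by the asserted window computation $w_{\la,m}t_{-\omega_m^\vee}=b^{-1}(\la^\vee)$.
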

\begin{proof} It suffices to check equality on algebra generators
$q_i$ and $x_1+\dotsm+x_i$. For $q_i$, defining
$\omega_0^\vee=\omega_n^\vee=0$ we have $\alpha_i^\vee =
-\omega_{i+1}^\vee + 2 \omega_i^\vee-\omega_{i-1}^\vee$. Therefore
\begin{align}
  \Phi'(q_i) = \Phi'(q_{\alpha^\vee_i}) =
  \dfrac{s_{R_{i+1}}s_{R_{i-1}}}{s_{R_i}^2} = \Phi(q_i)
\end{align}
for $1\le i\le n-1$. We also have
\begin{align*}
  \Phi'(x_1+\dotsm+x_i) = \Phi'(E^q_1(i))= \dfrac{h_1^\perp R_i}{R_i}
  = \Phi(x_1+\dotsm+x_i)
\end{align*}
where we have used Proposition \ref{P:QSchurimage} for $\la$ a column of size
$i$.
\end{proof}

\subsection{Quantum Schur symmetric polynomials}

For a partition $\la=(\la_1,\dotsc,\la_m)$ define the quantum Schur
function
\begin{align} S^q_\la(x_1,\dotsc,x_m) = \det
E^q_{\la'_i-i+j}(x_1,\dotsc,x_m).
\end{align}
\begin{prop}\label{P:QSchurimage}
For $\la_1 \le n-m$, we have
\begin{align*}
\Phi'(S^q_\la(x_1,\dotsc,x_m)) = \dfrac{s_{\la'}^\perp
s_{R_m}}{s_{R_m}}.
\end{align*}

\end{prop}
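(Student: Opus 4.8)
The plan is to first replace $\Phi'$ by the explicit substitution $\Phi$, which is legitimate by the preceding Proposition, and then to exploit that $\Phi$ is an algebra homomorphism. Since $S^q_\la=\det(E^q_{\la'_i-i+j}(x_1,\dots,x_m))$ is by definition a polynomial in the $E^q_a(m)$, applying $\Phi$ commutes with the determinant and gives $\Phi(S^q_\la)=\det\bigl(\Phi(E^q_{\la'_i-i+j}(m))\bigr)$. Thus the statement splits into two pieces: (i) evaluating $\Phi$ on a single quantum elementary polynomial, and (ii) a determinantal identity assembling these values into the single skewing operator $s_{\la'}^\perp$.

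For (i) I would prove $\Phi(E^q_a(m))=(h_a^\perp s_{R_m})/s_{R_m}$ by induction on $m$ using the defining recursion $E^q_a(m)=E^q_a(m-1)+x_m\,E^q_{a-1}(m-1)+q_{m-1}\,E^q_{a-2}(m-2)$. Substituting $\Phi(q_{m-1})=s_{R_{m-2}}s_{R_m}/s_{R_{m-1}}^2$ and $\Phi(x_m)=s_{R'_m}/s_{R_m}-s_{R'_{m-1}}/s_{R_{m-1}}$ reduces the inductive step to identities among Schur functions of the rectangles $R_{m-1},R_m$ and their corner-deletions; these follow from the Pieri rule together with the fact that a rectangle has a unique removable corner, so that $h_1^\perp s_{R_j}=s_{R'_j}$. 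This is the combinatorial shadow of Kostant's factorization $\Psi(g)=n_-(g)^{-1}e\,n_-(g)$: the entries $(h_{i-j}^\perp s_{R_{i-1}})/s_{R_{i-1}}$ of $n_-(g)^{-1}$ recorded in the Remark are exactly these normalized single skewings, and $E^q_a(m)$ is the $a$-th coefficient of the characteristic polynomial of the leading $m\times m$ block of the Lax matrix $L=\Psi(g)$.

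The hard part will be (ii): proving
\[
\det_{1\le i,j\le \la_1}\Bigl(\frac{h_{\la'_i-i+j}^\perp s_{R_m}}{s_{R_m}}\Bigr)=\frac{s_{\la'}^\perp s_{R_m}}{s_{R_m}}.
\]
The operator identity $s_{\la'}^\perp=\det(h_{\la'_i-i+j}^\perp)$ is automatic, since skewing is an algebra map onto a commutative subalgebra and so commutes with the Jacobi--Trudi determinant; the genuine difficulty is to move the normalization through the determinant, that is, to see that the \emph{multiplicative} determinant of the normalized skewings equals the \emph{operator} determinant applied to $s_{R_m}$ and normalized only once. This fails for a general symmetric function in place of $s_{R_m}$, so the proof must use the rectangle in an essential way; the hypothesis $\la_1\le n-m$ (equivalently $\la'\subseteq R_m$) should be precisely what keeps every shape arising in $s_{\la'}^\perp s_{R_m}$ inside $R_m$ and forces the off-diagonal cross terms to cancel. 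I expect the cleanest route is a Lindström--Gessel--Viennot model, realizing each $h_a^\perp s_{R_m}/s_{R_m}$ through non-intersecting lattice paths constrained by the rectangular boundary, so that the determinant reorganizes as a signed sum over path families collapsing onto $s_{\la'}^\perp s_{R_m}$; alternatively one can run a Desnanot--Jacobi/Plücker induction peeling off the columns of $\la'$, the boundary hypothesis ensuring the relevant relations among near-rectangular Schur functions close up. A more conceptual variant applies Cauchy--Binet to $L^{(m)}=P_m\,n_-(g)^{-1}e\,n_-(g)\,P_m^{\!\top}$, where the minors of the principal nilpotent $e$ are mere shifts, reducing $S^q_\la$ directly to a determinant in the skewing entries of $n_-(g)^{\pm1}$.
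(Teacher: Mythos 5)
Your opening reduction is circular within the logic of this paper: you pass from $\Phi'$ to $\Phi$ by citing the preceding proposition $\Phi=\Phi'$, but the paper's proof of that proposition itself invokes Proposition \ref{P:QSchurimage} (in the single-box case) to evaluate $\Phi'(x_1+\dotsm+x_i)=\Phi'(E^q_1(i))$. The statement to be proved concerns $\Phi'$, i.e.\ Peterson's isomorphism composed with the $k$-Schur correspondence, and any proof must at some point compute the image of an actual quantum Schubert class under Theorem \ref{T:P}; your plan never touches that map. The paper's proof consists of exactly that computation and nothing else: by Fomin--Gelfand--Postnikov, $S^q_\la(m)=\det(E^q_{\la'_i-i+j})$ \emph{is} the quantum Schubert class $\sigma^{w_{\la,m}}$ of the $m$-Grassmannian permutation of shape $\la$ (the quantization of the dual Jacobi--Trudi identity for $\Schub_{w_{\la,m}}$), so Theorem \ref{T:P} gives $\Phi'(S^q_\la(m))=\xi_{w_{\la,m}t_{-\omega_m^\vee}}\xi_{t_{-\omega_m^\vee}}^{-1}$; a window-notation computation identifies $w_{\la,m}t_{-\omega_m^\vee}$ with $b^{-1}(\la^\vee)$, where $\la^\vee$ is the transposed complement of $\la$ in $R_{n-m}$, and the answer $s_{\la^\vee}/s_{R_m}=(s_{\la'}^\perp s_{R_m})/s_{R_m}$ drops out with no determinant manipulation at all.

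Setting the circularity aside, both computational steps of your route are left open. In (i), the inductive step is not a consequence of ``Pieri plus unique removable corner'': after clearing denominators it is a quadratic, Pl\"ucker-type identity among products of (skew) Schur functions of rectangles, namely
\begin{equation*}
(h_a^\perp s_{R_m})\,s_{R_{m-1}}^2=(h_a^\perp s_{R_{m-1}})\,s_{R_{m-1}}s_{R_m}+\bigl(s_{R'_m}s_{R_{m-1}}-s_{R'_{m-1}}s_{R_m}\bigr)(h_{a-1}^\perp s_{R_{m-1}})+s_{R_m}^2\,(h_{a-2}^\perp s_{R_{m-2}}),
\end{equation*}
which is true but requires a genuine argument. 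In (ii) you correctly isolate the crux --- that the normalization by $s_{R_m}$ must be commuted through the determinant, and that this fails for a general symmetric function in place of $s_{R_m}$ --- but you only name candidate strategies (LGV, Desnanot--Jacobi, Cauchy--Binet on the Lax matrix) without executing any of them. A completed version of your computation would be a legitimate and genuinely different proof of the $\Phi$-version of the identity (and would then break the circularity by furnishing an independent proof that $\Phi=\Phi'$), but as written the two essential identities are asserted rather than proved, and you have labelled the second one ``the hard part'' without resolving it.
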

\begin{proof}
Let $1\le m\le n-1$ and $\la$ a partition such that $\ell(\la)\le m$
and $\la_1\le n-m$, so that $\la \subset R_{n-m}$. Let $w_{\la,m}$
be the unique $m$-Grassmannian permutation in $S_n$ of shape $\la$:
in one line notation, $w_{\la,m}$ starts with
$1+\la_m,2+\la_{m-1},\dotsc, m+\la_1$, and ends with the
complementary numbers in increasing order. It is known that
$s_\la(x_1,\dotsc,x_m)=\det(e_{\la'_i-i+j})$ is the ordinary
Schubert class of the $m$-Grassmannian permutation $w_{\la,m}$ in
$H^*(\Gr(m,n))$. Quantizing this relation (that is, using
\cite{FGP}) yields $\sigma^{w_\la,m} = \det (E^q_{\la'_i-i+j}) =
S^q_\la(m)$. We have
\begin{align*}
\Phi'(S^q_\la(m)) = \Phi'(\sigma^{w_{\la,m}}) = \xi_{w_{\la,m}
t_{-\omega_m^\vee}} \xi_{t_{-\omega_m^\vee}}^{-1}.
\end{align*}
By direct computation, $w_{\la,m}t_{-\omega_m^\vee} =
b^{-1}(\la^\vee)$ where $\la^\vee$ is the partition obtained by
taking the complement of $\la$ in $R_{n-m}$ and then taking the
transpose. Therefore
\begin{align*}
\Phi'(S^q_\la(m)) &= \dfrac{s_{\la^\vee}}{s_{R_m}}.
\end{align*}\end{proof}

\section{Explicit computation of $\la(w)$}\label{S:la}
We now describe the map $w\mapsto \la(w)$ of Theorem \ref{T:main} explicitly.  For simplicity of notation, we first assume that $w \in S_n$ satisfies $w(1) = 1$.

Let $c_i =  s_{n-i}\dotsm s_{n-2}s_{n-1} \in S_n$ for $1\le i\le n-1$.  Then there is a unique sequence $(m_1,m_2,\dotsc,m_{n-1})$ of integers such that $0\le m_1 \le n-2$,
$0\le m_2 \le n-3$, $\dotsc$, $0\le m_{n-2} \le 1$ such that
\begin{align*}
  w = c_{n-2}^{m_1} \dotsm  c_2^{m_{n-3}} c_1^{m_{n-2}}.
\end{align*}
Then $\la(w)$ is the $(n-1)$-irreducible bounded partition $\la$ with $m_i$ equal to the number of parts of size $i$.  The following example shows how to obtain $m_i$ from $w$ algorithmically.  Another description of $w$ in terms of the inversion set of $w$ is given in the proof of Lemma \ref{L:law}.

\begin{example} Let $n=6$ and $\la = (4,3,2,2,2,1,1)$ so that $m_1=2$, $m_2=3$, $m_3=1$, $m_4=1$.
We have
\begin{align*}
  w = (s_2s_3s_4s_5)^2(s_3s_4s_5)^3(s_4s_5)^1(s_5)^1.
\end{align*}
In one line notation $w=[1,4,3,6,5,2]$. To go from $w$ to $\la(w)$,
we start with $[1,2,3,4,5,6]$ and must obtain $4$ in position $2$. Therefore we must
left circular shift twice in the last five positions, that is, $m_1=2$. We obtain $[1,4,5,6,2,3]$.
Next we must get $3$ into position $3$, which requires three left circular shifts in the last four positions,
that is, $m_2=3$. We obtain $[1,4,3,5,6,2]$. To get $6$ to position $4$ we need $m_3=1$ and we obtain $[1,4,3,6,2,5]$.
Finally to get $5$ to position $5$ we need $m_4=1$ and we obtain $[1,4,3,6,5,2]$ as required.
\end{example}

\begin{lem}\label{L:law}
The element $y = w \prod_{i \in \Des(w)} t_{-\omega_i^\vee}$ lies in $\hS_n^0$, and modulo
conjugation by an affine Dynkin diagram rotation, is equal to $x \in \tS_n^0$ where $b(x) = \la(w)$.
\end{lem}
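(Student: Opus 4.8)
The plan is to prove the two assertions in turn: that $y\in\hS_n^0$, by a short computation in window notation, and that the affine-Grassmannian part of $y$ is $b^{-1}(\la(w))$, by a comparison of inversion sets, which is where the real work lies.

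First I would put $y$ in window notation. Set $D=\Des(w)$ and $\mu=-\sum_{i\in D}\omega_i^\vee\in P^\vee$; then $\mu_j=-\#\{i\in D:i\ge j\}$, so that $\mu_{j+1}-\mu_j$ equals $1$ if $j\in D$ and $0$ otherwise. Since $w(i+n)=w(i)+n$, the element $y=w\,t_\mu$ has window $y(j)=w(j)+n\mu_j$ for $j\in[1,n]$, and to prove $y\in\hS_n^0$ it suffices to check $y(1)<\dotsm<y(n)$. If $j\notin D$ then $\mu_{j+1}=\mu_j$ and $w(j)<w(j+1)$ (as $j$ is not a descent), so $y(j)<y(j+1)$; if $j\in D$ then $\mu_{j+1}=\mu_j+1$ and $w(j)-w(j+1)\le n-1$, whence $y(j)=w(j)+n\mu_j<w(j+1)+n+n\mu_j=y(j+1)$. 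This settles the first claim, and I observe that it uses nothing about $w(1)$.

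Next I would peel off the rotation. Using $\hS_n\cong\Omega\ltimes\tS_n$ with $\Omega=\langle\rho\rangle\cong\Z/n\Z$ the length-zero diagram rotations ($\rho s_i\rho^{-1}=s_{i+1}$), the window sum $\sum_j y(j)=\binom{n+1}{2}-n\sum_{i\in D}i$ identifies the $\Omega$-component of $y$ as $\rho^a$ with $a\equiv-\sum_{i\in D}i\pmod n$. Because $\rho^{-a}$ alters each window entry by a constant, the element $x:=\rho^{-a}y$ has strictly increasing window and hence lies in $\tS_n^0$; this is the sense in which $y$ reduces, modulo the rotation $\rho^a$, to $x\in\tS_n^0$, and by the definition of $\xi$ on $\hS_n$ we have $\xi_y=\xi_x$. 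It then remains only to compute $b(x)$.

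The main obstacle is the identification $b(x)=\la(w)$. I would first convert the factorization $w=c_{n-2}^{m_1}\dotsm c_1^{m_{n-2}}$, with $c_j=s_{n-j}\dotsm s_{n-1}$ a cyclic shift of a terminal block of the window, into an explicit description of the inversion set of $w$ in terms of the multiplicities $m_i$ (this is the inversion-set description the statement alludes to). I would then read the $n$-core $x\cdot\emptyset$ off the window $[x(1),\dotsc,x(n)]$ via beta-numbers, express its cells as the finite inversions of $w$ inflated by the translation $\mu$, and pass to the associated $(n-1)$-bounded partition, the target being to show it has exactly $m_i$ parts of size $i$, i.e.\ equals $\la(w)$. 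The delicate bookkeeping is to match each cyclic factor $c_{n-1-i}$, occurring with multiplicity $m_i$, against the $m_i$ parts of size $i$, while tracking how the descent-controlled translation $\mu$ turns a single finite inversion into a run of core cells; I expect to organize this inductively, appending one factor $c_j$ at a time and following the induced changes in $D$, in $\mu$, and in the core. The constraint $0\le m_i\le n-1-i$ is exactly the $(n-1)$-irreducibility that keeps $x$ a genuine minimal coset representative, in agreement with the rectangle factorization of Theorem~\ref{T:LM}, and the general case $w(1)\ne1$ reduces to this one through the extended description of $\la(w)$ in Section~\ref{S:la}.
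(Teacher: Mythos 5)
Your verification of the first claim is correct and essentially coincides with the paper's: you write $y(j)=w(j)+n\mu_j$ in window notation and check $y(j)<y(j+1)$ according to whether $j\in\Des(w)$, which is exactly the paper's one\--line argument (there phrased via the statistic $\des_j(w)$ counting descents relative to position $j$). Your extraction of the length-zero rotation component from the window sum, giving $x=\rho^{-a}y\in\tS_n^0$ with $\xi_y=\xi_x$, is also fine.

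The genuine gap is the second claim, $b(x)=\la(w)$, which is the entire content of the lemma: your third paragraph is a plan, not a proof. It is written in the conditional throughout (``I would first convert\dots'', ``I would then read\dots'', ``I expect to organize this inductively\dots'') and explicitly defers the ``delicate bookkeeping'' of matching each factor $c_{n-1-i}^{m_i}$ to the $m_i$ parts of size $i$; no step of that matching is actually carried out or even precisely formulated. For comparison, the paper closes this gap as follows: by \cite[Proposition~8.15]{LLMS} it suffices to prove $\la'_r=\#\{j<r\mid y(j)>y(r)\}$ with $y$ viewed as a bijection of $\Z$; introducing $a(i,j)=\des_j(w)-\des_i(w)-\chi(w(i)>w(j))$ turns this count into $\sum_{j>r}a(r,j)$; the partial products $c_{n-2}^{m_1}\dotsm c_{n-r}^{m_{r-1}}$ are characterized (by induction) as exactly the permutations whose suffix $w(r),w(r+1),\dotsc,w(n)$ has a single cyclic descent; and one then checks that each further multiplication by $c_{n-r}$ increases $\sum_{j>r'}a(r',j)$ by $1$ for every $r'\in[1,r-1]$, which produces precisely the multiplicities $m_i$ defining $\la(w)$. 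Your proposed detour through the $n$-core $x\cdot\emptyset$ and beta-numbers could plausibly be made to work, but as written it supplies neither a reduction of $b(x)$ to a concrete inversion count nor the inductive step tracking one factor $c_j$ at a time, so the identification $b(x)=\la(w)$ remains unproved.
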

\begin{proof}
For $1 \leq i \leq n$, let $\des_i(w)$ denote the number of descents of $w$ which lie before $i$.
In one line notation, one has $y(i) = w(i) - \des_i(w)n$.  Since $y \in \hS_n^0$ if and only if $y(1) < y(2) < \cdots < y(n)$, the first claim follows.

To obtain $b(x)$, we use \cite[Proposition 8.15]{LLMS}.  In the current setting, the proposition says that we must show
\begin{equation}\label{E:lar}
\la'_r = \#\{j < r \mid y(j) > y(r)\}
\end{equation}
where we think of $y$ as a bijection from $\Z$ to $\Z$ when evaluating $y(j)$.  For $1 \leq i < j \leq n$, define $a(i,j) := \des_j(w) - \des_i(w) - \chi(w(i) > w(j))$.  The RHS of \eqref{E:lar} is then equal to $\sum_{j > r} a(r,j)$.

It is straightforward to prove by induction the following characterization of the permutations $w = c_{n-2}^{m_1} \dotsm  c_{n-r}^{m_{r-1}}$: these are exactly the permutations satisfying $w(r) < w(r+1) < \cdots < w(s) > w(s+1) < w(s+2) < \cdots <w(n) < w(r)$.  In other words, the word $w(r)w(r+1) \cdots w(s)$ has one cyclic descent.  Using this, it is easy to see that successive multiplication by $c_{n-r}$ increases $\sum_{j> r'} a(r',j)$ by 1 for each $r' \in [1,r-1]$.  This establishes \eqref{E:lar}.
\end{proof}

To complete the description of $\la(w)$, we remove the condition $w(1) = 1$.
For $w\in S_n$ in one line notation $w=[w(1),w(2),\dotsc,w(n)]$,
define $w'=[w(1)-1\mod n,w(2)-1\mod n,\dotsc,w(n)-1\mod n]$ where
the $\mod n$ function takes values in $\{1,2,\dotsc,n\}$.  The following result includes a property of quantum Schubert classes which may be new.

\begin{prop}\label{P:cyclic}
We have $\la(w') = \la(w)$ and
\begin{align*}
q_{\omega^\vee_{w^{-1}(1)-1}}   \sigma^{w'} = q_{\omega^\vee_{w^{-1}(1)}} \sigma^w
\end{align*}
where $q_{\omega^\vee_0}=q_{\omega^\vee_n}=1$.
\end{prop}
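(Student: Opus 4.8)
The plan is to realize the operation $w\mapsto w'$ as a length-zero diagram rotation inside the extended affine symmetric group $\hS_n$, and then to read off both assertions from Lemma~\ref{L:law} and Peterson's isomorphism (Theorem~\ref{T:P}). The first step is the key structural identity. Let $\rho\in\hS_n$ be the generator of the $\Z/n\Z$ factor with window $[2,3,\dots,n+1]$, i.e.\ $\rho(i)=i+1$; it has length zero. Writing $p=w^{-1}(1)$ and using the convention $\omega_0^\vee=\omega_n^\vee=0$, a direct check in window notation gives
$$ w' = \rho^{-1}\,w\,t_{e_p},\qquad e_p=\omega_p^\vee-\omega_{p-1}^\vee, $$
because multiplying $w$ on the right by $t_{e_p}$ replaces the window entry $w(p)=1$ by $1+n$, while left multiplication by $\rho^{-1}$ subtracts $1$ from every entry, which is exactly the definition of $w'$.

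Next I would track the descent set. Comparing $w$ and $w'$ entrywise, the only changes are that the descent of $w$ at position $p-1$ (present iff $p>1$) is destroyed and a descent of $w'$ at position $p$ (present iff $p<n$) is created, all other descents being unchanged. With $\omega_0^\vee=\omega_n^\vee=0$ this yields, in $P^\vee$,
$$ \sum_{i\in\Des(w')}\omega_i^\vee=\sum_{i\in\Des(w)}\omega_i^\vee+\omega_p^\vee-\omega_{p-1}^\vee=\sum_{i\in\Des(w)}\omega_i^\vee+e_p. $$
Substituting this into the definition of the elements $y,y'$ of Lemma~\ref{L:law} and using that translations commute, the factor $t_{e_p}$ cancels and I obtain the clean relation $y'=\rho^{-1}y$. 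Since $\rho^{-1}$ is a length-zero element, $y$ and $y'$ lie in the same left $\Z/n\Z$-coset and hence share the same $\tS_n^0$-representative $x$; by Lemma~\ref{L:law} this is the $x$ with $b(x)=\la(w)$, so $\la(w')=b(x)=\la(w)$, which is the first assertion. (Equivalently $\xi_y=\xi_{y'}$ by the definition of $\xi$ on $\hS_n$, and Theorem~\ref{T:L} forces $\la(w')=\la(w)$.)

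For the quantum identity I would apply the isomorphism of Theorem~\ref{T:P} and invoke its injectivity. On the right-hand side take $\mu=-\sum_{i\in\Des(w)}\omega_i^\vee$ and $\lambda=\mu-\omega_p^\vee$; both are antidominant, $\mu-\lambda=\omega_p^\vee$, and $wt_\mu=y$, so Theorem~\ref{T:P} sends $q_{\omega_p^\vee}\sigma^w\mapsto \xi_y\,\xi_{t_\lambda}^{-1}$. On the left-hand side take $\mu'=-\sum_{i\in\Des(w')}\omega_i^\vee$ and $\lambda'=\mu'-\omega_{p-1}^\vee$ (again antidominant), giving $q_{\omega_{p-1}^\vee}\sigma^{w'}\mapsto \xi_{y'}\,\xi_{t_{\lambda'}}^{-1}$. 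The descent computation shows $\lambda'=\lambda$ in $P^\vee$, hence $t_{\lambda'}=t_\lambda$, while $\xi_{y'}=\xi_y$ from $y'=\rho^{-1}y$. Thus the two images coincide, and since the map is an isomorphism (and $QH^*(\Fl_n)$ injects into its $q$-localization, being free over $\C[q_1,\dots,q_{n-1}]$) the two elements are equal in $QH^*(\Fl_n)$.

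The main obstacle is the bookkeeping of the first two steps, not anything conceptual: one must verify the window identity $w'=\rho^{-1}wt_{e_p}$ and, above all, treat the boundary cases $p=1$ and $p=n$ uniformly. This is precisely where the convention $\omega_0^\vee=\omega_n^\vee=0$ (equivalently $q_{\omega_0^\vee}=q_{\omega_n^\vee}=1$) does its work, absorbing the created-or-destroyed descent at the ends of the window. Once $y'=\rho^{-1}y$ and $\lambda'=\lambda$ are in hand, both halves of the proposition follow at once.
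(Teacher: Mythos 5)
Your proposal is correct and follows essentially the same route as the paper: the identity $w'=\rho^{-1}wt_{e_p}$ is exactly the paper's factorization $w'=w_0^{\omega_1}w=\tau_1 w t_{w^{-1}\cdot\omega_1^\vee}$ (with $\rho^{-1}=\tau_1$), and the descent-set bookkeeping and the use of Peterson's isomorphism with the length-zero factor absorbed into $\xi$ are the same two ingredients the paper uses. The only cosmetic difference is that you conclude by injectivity of the Peterson map with explicitly chosen antidominant $\mu,\lambda$, whereas the paper pushes everything through $\Phi$ and reads off the factor $s_{R_{p-1}}/s_{R_p}$; you should just note that the first claim of Lemma~\ref{L:law} (that $y\in\hS_n^0$), which you invoke for general $w$, holds by the same one-line argument without the hypothesis $w(1)=1$.
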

\begin{proof}
One has $w' =
s_{n-1} s_{n-2} \dotsm s_2 s_1 w$. Now we have $s_{n-1}\dotsm s_2
s_1 = w_0^{\omega_1} = \tau_1 t_{-\omega_1^\vee}$. We have
\begin{align*}
  \Phi(\sigma^{w'}) = \Phi(\sigma^{w_0^{\omega_1^\vee} w}) = \xi_{w_0^{\omega_1^\vee} w t_\beta} \xi_{t_\beta}^{-1}
\end{align*}
for sufficiently antidominant $\beta\in \tQ$. We compute
\begin{align*}
  w_0^{\omega_1^\vee} w t_\beta =
  w_0^{\omega_1^\vee} w t_{-w^{-1} \cdot\omega_1^\vee} t_{\beta
  +w^{-1} \cdot\omega_1^\vee}
  = w_0^{\omega_1^\vee}  t_{-\omega_1^\vee} w t_{\beta
  +w^{-1} \cdot\omega_1^\vee}
  = \tau_1 w t_{\beta+w^{-1}\cdot \omega_1^\vee}.
\end{align*}
It follows that
\begin{align*}
\Phi(\sigma^{w'}) &= \Phi(\sigma^w)
\dfrac{s_{R_{w^{-1}(1)-1}}}{s_{R_{w^{-1}(1)}}}.
\end{align*}
Now $\Des(w')$ is obtained from $\Des(w)$ by removing $w^{-1}(1)-1$
and adding $w^{-1}(1)$, so the result follows.
\end{proof}

\begin{equation*}
\begin{array}{|c|l|} \hline
w & \la(w) \\ \hline
(1,2) & () \\ \hline
\end{array}
\quad
\begin{array}{|c|l|} \hline
w & \la(w) \\ \hline
(1,2,3) & () \\
(1,3,2) & (1) \\ \hline
\end{array}
\quad
\begin{array}{|c|l|} \hline
w & \la(w) \\ \hline
(1,2,3,4) & () \\
(1,2,4,3) & (2) \\
(1,3,2,4) & (2,1) \\
(1,3,4,2) & (1) \\
(1,4,2,3) & (1,1) \\
(1,4,3,2) & (2,1,1) \\ \hline
\end{array}
\end{equation*}

\begin{equation*}
\begin{array}{|c|l|} \hline
w & \la(w) \\ \hline
(1,2,3,4,5) & () \\
(1,2,3,5,4) & (3) \\
(1,2,4,3,5) & (3,2) \\
(1,2,4,5,3) & (2) \\
(1,2,5,3,4) & (2,2) \\
(1,2,5,4,3) & (3,2,2) \\
(1,3,2,4,5) & (2,2,1) \\
(1,3,2,5,4) & (3,2,2,1) \\
(1,3,4,2,5) & (3,1) \\
(1,3,4,5,2) & (1) \\
(1,3,5,2,4) & (2,1) \\
(1,3,5,4,2) & (3,2,1) \\
(1,4,2,3,5) & (2,1,1) \\
(1,4,2,5,3) & (3,2,1,1) \\
(1,4,3,2,5) & (3,2,2,1,1) \\
(1,4,3,5,2) & (2,2,1,1) \\
(1,4,5,2,3) & (1,1) \\
(1,4,5,3,2) & (3,1,1) \\
(1,5,2,3,4) & (1,1,1) \\
(1,5,2,4,3) & (3,1,1,1) \\
(1,5,3,2,4) & (3,2,1,1,1) \\
(1,5,3,4,2) & (2,1,1,1) \\
(1,5,4,2,3) & (2,2,1,1,1) \\
(1,5,4,3,2) & (3,2,2,1,1,1) \\ \hline
\end{array}
\end{equation*}

%


\appendix
\section{Extending the Peterson isomorphism}\label{S:extend}
In this appendix we work in the setting of an arbitrary Weyl group $W$ of a simply-connected algebraic group $G$.  Our notation follows that of \cite{LS}.  Let $I=\{1,2,\dotsc,r\}$ and
\begin{align*}
P^\vee &= \bigoplus_{i=1}^r \Z \omega_i^\vee \quad&\tP &= \{\mu\in
P^\vee\mid \text{$\ip{\mu}{\alpha_i} \le 0$ for all $i\in I$} \}
 \\
Q^\vee &= \bigoplus_{i=1}^r \Z \alpha_i^\vee
 &
\tQ &= \tP \cap Q^\vee.
\end{align*}
Let $\{\sigma^w\in QH^*(G/B) \mid w\in W\}$ denote the Schubert
basis of the small quantum cohomology ring of $G/B$. Let $\{\xi_x\in
H_*(\Gr_G) \mid x\in W_\af^0\}$ be the Schubert basis of the
homology ring of the affine Grassmannian of $G$.

\begin{lem} \label{L:coset}
Let $w\in W$ and $\mu\in Q^\vee$. Then $w t_\mu\in W_\af^0$ if and
only if $\mu\in \tQ$ and whenever $wr_i<w$ for some $i\in I$ we have
$\ip{\mu}{\alpha_i} < 0$. In particular, letting $\rho^\vee
=\sum_{i\in I} \omega^\vee_i$, for any $w\in W$ we have $w t_\mu\in
W_\af^0$ for all $\mu\in Q^\vee$ such that $\mu + 2\rho \in \tQ$.
\end{lem}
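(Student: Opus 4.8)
The plan is to reduce the whole statement to a short computation with affine roots. Recall that $W_\af^0$ is the set of minimal-length representatives of $W_\af/W$, and that an element $x\in W_\af$ lies in $W_\af^0$ exactly when it has no right descent among the finite simple reflections, i.e.\ $\ell(x r_i)>\ell(x)$ for every $i\in I$. First I would invoke the standard criterion that $x r_i>x$ is equivalent to $x(\alpha_i)$ being a positive affine real root. The lemma then becomes the assertion that $(w t_\mu)(\alpha_i)$ is positive for all $i\in I$ if and only if the two listed conditions hold.

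Next I would compute this action. Since $w$ acts through its linear action on the finite part and $t_\mu$ sends $\alpha+k\delta\mapsto \alpha+(k-\ip{\mu}{\alpha})\delta$, one obtains
$$
(w t_\mu)(\alpha_i)=w\alpha_i-\ip{\mu}{\alpha_i}\,\delta .
$$
An affine real root $\beta+k\delta$ is positive precisely when $k>0$, or when $k=0$ and $\beta$ is a positive finite root. Reading off $\beta=w\alpha_i$ and $k=-\ip{\mu}{\alpha_i}$, positivity of $(w t_\mu)(\alpha_i)$ is equivalent to: either $\ip{\mu}{\alpha_i}<0$, or $\ip{\mu}{\alpha_i}=0$ together with $w\alpha_i$ a positive root.

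I would then assemble these conditions over all $i$. In every case $\ip{\mu}{\alpha_i}\le 0$, so $\mu\in\tP$; combined with the hypothesis $\mu\in Q^\vee$ this gives $\mu\in\tQ$, the first condition. For the remainder I would use the elementary Weyl-group fact that $w r_i<w$ is equivalent to $w\alpha_i<0$. Given $\ip{\mu}{\alpha_i}\le 0$, the leftover requirement ``$\ip{\mu}{\alpha_i}=0$ forces $w\alpha_i>0$'' is exactly the contrapositive of ``$w r_i<w$ forces $\ip{\mu}{\alpha_i}<0$'', which is the stated descent condition; this proves the equivalence. For the final clause, $\ip{\omega_j^\vee}{\alpha_i}=\delta_{ij}$ gives $\ip{\rho^\vee}{\alpha_i}=1$, so $\mu+2\rho^\vee\in\tQ$ forces $\ip{\mu}{\alpha_i}\le -2<0$ for every $i$; then both conditions hold for arbitrary $w$, whence $w t_\mu\in W_\af^0$ for all $w\in W$.

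I expect the main obstacle to be bookkeeping rather than substance: one must pin down consistently the side on which $W$ sits in the coset, the sign convention under which $\alpha+k\delta$ is positive for $k>0$, and the exact sign in the translation action on roots, since reversing any of these would turn antidominance into dominance or interchange the strict and non-strict inequalities. The only genuinely delicate point is the borderline case $\ip{\mu}{\alpha_i}=0$, where membership in $W_\af^0$ is decided not by $\mu$ but by the sign of the finite root $w\alpha_i$, equivalently by whether $r_i$ is a descent of $w$.
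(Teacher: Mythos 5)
Your proof is correct. The paper states Lemma \ref{L:coset} without proof, so there is nothing to compare against; your argument --- reducing membership in $W_\af^0$ to positivity of $(wt_\mu)(\alpha_i)$ via the standard criterion $\ell(xr_i)>\ell(x)\iff x(\alpha_i)>0$, computing $(wt_\mu)(\alpha_i)=w\alpha_i-\ip{\mu}{\alpha_i}\delta$, and splitting into the cases $\ip{\mu}{\alpha_i}<0$ and $\ip{\mu}{\alpha_i}=0$ --- is the standard one and surely what the authors had in mind, with the signs and the borderline case handled correctly.
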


Peterson \cite{P} (see also \cite{LS}) defined a ring isomorphism
\begin{equation}
\label{E:Petiso}
\begin{split}
QH^*(G/B)[q_1^{-1},\dotsc,q_r^{-1}] &\cong
H_*(\Gr_G)[\xi_{t_\mu}^{-1}\mid \mu\in \tQ] \\
\sigma^w q_{\la-\mu} &\mapsto \xi_{w t_\la} \xi_{t_\mu}^{-1}
\end{split}
\end{equation}
for $w\in W$ and $\la,\mu\in \tQ$ such that $w t_\mu\in W_\af^0$.  We wish to give a more precise description of the denominators that
occur in the right hand side of \eqref{E:Petiso}.

Let $\Aut(I_\af)$ denote the group of automorphisms of the affine
Dynkin diagram. Let $I^s = \Aut(I_\af)\cdot \{0\}$ be the set of
special nodes where $0\in I_\af$ is the distinguished affine node.
There is a bijection $I^s\cong P^\vee/Q^\vee$ such that $i\mapsto
-\omega_i^\vee+Q^\vee$ where $\omega_i^\vee$ is the fundamental
coroot for $i\in I^s\setminus \{0\}$ and $\omega_0^\vee = 0$. For
each $i\in I^s$, subtraction by $\omega_i^\vee+Q^\vee$ induces a
permutation of $I^s$ denoted $\tau_i$ which extends uniquely to an
element $\tau_i\in \Aut(I_\af)$, which is called the special
automorphism associated with $i\in I^s$. It satisfies $\tau_i(i)=0$.
There is a group monomorphism $P^\vee/Q^\vee \to \Aut(I_\af)$ such
that $-\omega_i^\vee+Q^\vee\mapsto \tau_i$ for $i\in I^s$. We denote
the image of this map by $\Aut^s(I_\af)$, the subgroup of special
automorphisms. We write $i\mapsto i^*$ for the element of
$\Aut(I_\af)$ such that $0^*=0$ and $w_0 r_i w_0 = r_{i^*}$ for
$i\in I$. Equivalently $-w_0\cdot\alpha_i = \alpha_{i^*}$ or
$-w_0\cdot \omega_i^\vee = \omega^\vee_{i^*}$, or
$-\omega_i^\vee+Q^\vee = \omega_{i^*}^\vee+Q^\vee$ for $i\in I^s$.

Let $W_\ex \cong W \ltimes P^\vee \cong \Aut^s(I_\af) \ltimes W_\af$
be the extended affine Weyl group. For $w\in W$ and $\mu\in P^\vee$
we have $w t_\mu w^{-1} = t_{w\cdot \mu}$ and for $z\in
\Aut^s(I_\af)$ and $i\in I_\af$ we have $z r_i z^{-1} = r_{z(i)}$.
We use the level zero action of $W_\ex$ on $P^\vee$ given by $u
t_\la\cdot \mu = u\cdot \mu$ for $\la,\mu\in P^\vee$ and $u\in W$.
Then we have $z t_\mu z^{-1} = t_{z\cdot\mu}$ for all $z\in W_\ex$
and $\mu\in P^\vee$. With this notation we have
\begin{align}\label{E:tau}
\tau_i = w_0^{\omega_i} t_{-\omega_i^\vee}\qquad \text{for $i\in
I^s$}
\end{align}
where $w_0^{\omega_i} \in W$ is the shortest element in the coset
$w_0 W_{\omega_i}$ where $W_{\omega_i}$ is the stabilizer of
$\omega_i$ and $w_0\in W$ is the longest element. In particular if
$i=0$ then $w_0^{\omega_0}=\id$.

For $i\in I$, define $d_i\in W_\af$ to be the unique element such that
$$\Aut^s(I_\af) \,t_{-\omega_i^\vee} = \Aut^s(I_\af)\, d_i.$$

\begin{lem}\label{L:d} $d_i\in W_\af^0$.
If $i\in I^s$,
\begin{align} \label{E:dspecial}
  d_i = \tau_{i^*} \,w_0^{\omega_{i^*}} \,\tau_{i^*}^{-1}
  = \tau_{i^*} t_{-\omega_i^\vee}
  = w_0^{\omega_{i^*}} t_{-\omega_i^\vee-\omega_{i^*}^\vee}.
\end{align}
For general $i\in I$, let $j\in I^s$ be such that $\omega_i \equiv \omega_j \mod Q^\vee$. Then
\begin{align} \label{E:d}
  d_i = d_j \,t_{w_0^{\omega_{j^*}}\cdot(-\omega_i^\vee+\omega_j^\vee)}.
\end{align}
\end{lem}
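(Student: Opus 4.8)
The plan is to work entirely inside $W_\ex = W \ltimes P^\vee \cong \Aut^s(I_\af) \ltimes W_\af$ and to exploit that, under the second decomposition, $d_i$ is by definition nothing but the $W_\af$-component of $t_{-\omega_i^\vee}$: writing $t_{-\omega_i^\vee} = z\, d_i$ with $z \in \Aut^s(I_\af)$ and $d_i \in W_\af$ recovers the defining condition $\Aut^s(I_\af)\, t_{-\omega_i^\vee} = \Aut^s(I_\af)\, d_i$. Since the special automorphisms have length zero and normalize $W_\af$, all length and coset computations can be carried out after stripping off the $\Aut^s(I_\af)$-factor. Throughout I will use $z t_\mu z^{-1} = t_{z \cdot \mu}$ for $z \in W_\ex$, the level-zero rule $u t_\la \cdot \mu = u \cdot \mu$, and the presentation $\tau_k = w_0^{\omega_k} t_{-\omega_k^\vee}$ from \eqref{E:tau}.

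For the membership $d_i \in W_\af^0$ I would first observe that $t_{-\omega_i^\vee}$ is minimal in its coset $t_{-\omega_i^\vee} W$ inside $W_\ex$: indeed $-\omega_i^\vee$ is antidominant, so $\langle -\omega_i^\vee, \alpha_k\rangle \le 0$ and hence $t_{-\omega_i^\vee} r_k > t_{-\omega_i^\vee}$ for every finite simple reflection $r_k$, by the same length estimate used in Lemma \ref{L:coset}. Left multiplication by $z^{-1}$ preserves length and sends $W$-cosets to $W$-cosets, so $d_i = z^{-1} t_{-\omega_i^\vee}$ is again minimal in $d_i W$. As $d_i \in W_\af$ and its coset lies inside $W_\af$, minimality in $W_\ex$ and in $W_\af$ coincide, giving $d_i \in W_\af^0$.

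For the special case $i \in I^s$ I would verify the three expressions in \eqref{E:dspecial} directly. The rightmost equality is immediate from \eqref{E:tau}: $\tau_{i^*} t_{-\omega_i^\vee} = w_0^{\omega_{i^*}} t_{-\omega_{i^*}^\vee} t_{-\omega_i^\vee} = w_0^{\omega_{i^*}} t_{-\omega_i^\vee - \omega_{i^*}^\vee}$, and since $i \in I^s$ gives $-\omega_i^\vee \equiv \omega_{i^*}^\vee \pmod{Q^\vee}$, the coweight $-\omega_i^\vee - \omega_{i^*}^\vee$ lies in $Q^\vee$, so this element lies in $W_\af$; being visibly in $\Aut^s(I_\af)\, t_{-\omega_i^\vee}$, it equals $d_i$ by uniqueness. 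The conjugation form $\tau_{i^*} w_0^{\omega_{i^*}} \tau_{i^*}^{-1}$ then reduces to the identity $\tau_{i^*} = t_{\omega_i^\vee} w_0^{\omega_{i^*}}$, which I would obtain from the key computation $(w_0^{\omega_{i^*}})^{-1} \cdot \omega_i^\vee = -\omega_{i^*}^\vee$: writing $w_0^{\omega_{i^*}} = w_0 u$ with $u \in W_{\omega_{i^*}}$ fixing $\omega_{i^*}^\vee$ gives $w_0^{\omega_{i^*}} \cdot \omega_{i^*}^\vee = w_0 \cdot \omega_{i^*}^\vee = -\omega_i^\vee$, using $-w_0 \cdot \omega_{i^*}^\vee = \omega_{(i^*)^*}^\vee = \omega_i^\vee$.

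Finally, for general $i$ I would reduce to the special case. Let $j \in I^s$ with $\omega_i^\vee \equiv \omega_j^\vee \pmod{Q^\vee}$. Because $t_{-\omega_i^\vee}$ and $t_{-\omega_j^\vee}$ have the same image $-\omega_i^\vee + Q^\vee = -\omega_j^\vee + Q^\vee$ in $W_\ex/W_\af \cong \Aut^s(I_\af)$, the same special automorphism $\tau_{j^*}$ strips off both, so $d_i = \tau_{j^*} t_{-\omega_i^\vee}$ and $d_j = \tau_{j^*} t_{-\omega_j^\vee}$. Then $d_i d_j^{-1} = \tau_{j^*} t_{-\omega_i^\vee + \omega_j^\vee} \tau_{j^*}^{-1} = t_{\tau_{j^*} \cdot (-\omega_i^\vee + \omega_j^\vee)} = t_{w_0^{\omega_{j^*}} \cdot (-\omega_i^\vee + \omega_j^\vee)}$, using $z t_\mu z^{-1} = t_{z \cdot \mu}$ and $\tau_{j^*} \cdot \mu = w_0^{\omega_{j^*}} \cdot \mu$; this already produces the coweight appearing in \eqref{E:d}. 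The main obstacle is the bookkeeping needed to land the translation on the correct side of $d_j$: the computation gives $d_i = t_{w_0^{\omega_{j^*}} \cdot (-\omega_i^\vee + \omega_j^\vee)} d_j$, and matching the stated form $d_i = d_j\, t_{w_0^{\omega_{j^*}} \cdot (-\omega_i^\vee + \omega_j^\vee)}$ amounts to checking that $w_0^{\omega_{j^*}}$ fixes $\omega_j^\vee - \omega_i^\vee$. I would prove this last point by writing $w_0^{\omega_{j^*}} = w_0\, w_{0,I \setminus \{j^*\}}$ and showing that $\omega_i^\vee - \omega_j^\vee$ lies in the coroot span of $I \setminus \{j^*\}$, on which $w_0\, w_{0,I\setminus\{j^*\}}$ fixes this vector. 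This invariance is the delicate, type-dependent step, while everything else is routine manipulation in $W_\ex$.
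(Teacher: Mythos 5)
Your arguments for $d_i\in W_\af^0$ and for \eqref{E:dspecial} are correct. The membership argument --- $t_{-\omega_i^\vee}$ is minimal in $t_{-\omega_i^\vee}W$ because $-\omega_i^\vee$ is antidominant, and left multiplication by the length-zero element $z^{-1}$ transports this to $d_i$ --- is cleaner and more uniform than the paper's, which verifies $\ell(d_i r_k)>\ell(d_i)$ for each $k\in I$ by pushing the condition through the automorphism $\tau_{i^*}$ onto $w_0^{\omega_{i^*}}$ and handles general $i$ only by remarking that the factorization into translations is length-additive. Your derivation of the three forms in \eqref{E:dspecial} is essentially the paper's, run in the opposite order.

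The gap is in the last step of \eqref{E:d}. Your computation (which agrees with the paper's own displayed computation) gives $d_i=t_{w_0^{\omega_{j^*}}\cdot(-\omega_i^\vee+\omega_j^\vee)}\,d_j$ with the translation on the \emph{left}; commuting it past $d_j=w_0^{\omega_{j^*}}t_{-\omega_j^\vee-\omega_{j^*}^\vee}$ yields exactly $d_i=d_j\,t_{-\omega_i^\vee+\omega_j^\vee}$. You correctly observe that the printed form of \eqref{E:d} therefore requires the additional invariance $w_0^{\omega_{j^*}}\cdot(\omega_j^\vee-\omega_i^\vee)=\omega_j^\vee-\omega_i^\vee$, but your proposed justification of it fails: $w_0^{\omega_{j^*}}=w_0\,w_{0,J}$ with $J=I\setminus\{j^*\}$ does \emph{not} act as the identity on the coroot span of $J$. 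It sends $\alpha_m^\vee\mapsto\alpha^\vee_{(m^{*_J})^*}$ for $m\in J$, i.e.\ it acts by a composite of the two $*$-involutions, which is nontrivial in general; for instance in type $D_5$ with $j^*=1$ one computes that $w_0^{\omega_1}$ negates $e_1$ and $e_5$ and fixes $e_2,e_3,e_4$, so it sends $\alpha_4^\vee=e_4-e_5$ to $\alpha_5^\vee=e_4+e_5$. You also do not justify that $\omega_j^\vee-\omega_i^\vee$ lies in the $\Z$-span of $\{\alpha_m^\vee\mid m\in J\}$. So the one genuinely type-dependent assertion in your plan is left unproved, and the argument offered for it is false as stated. (The paper's own proof elides the same point, asserting that \eqref{E:d} ``holds by definition'' after arriving at the left-handed form; the unconditional conclusion of your computation is the identity $d_i=d_j\,t_{-\omega_i^\vee+\omega_j^\vee}$, equivalently \eqref{E:d} with the translation written to the left of $d_j$, and you should either prove the invariance for the specific vector $\omega_j^\vee-\omega_i^\vee$ or state the formula in one of these equivalent forms.)
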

\begin{proof} Suppose first that $i\in I^s$. Then
\begin{align*}
  t_{-\omega_i^\vee} &= (w_0^{\omega_i})^{-1} \tau_i \\
  &= \tau_i \tau_i^{-1} (w_0^{\omega_i})^{-1} \tau_i \\
  &= \tau_i (\tau_{i^*} w_0^{\omega_{i^*}} \tau_{i^*}^{-1}).
\end{align*}
By definition $d_i$ satisfies \eqref{E:dspecial}.
We now check that $d_i\in W_\af^0$.
Using the automorphism $\tau_{i^*}$ we see that this holds if and only if
$w_0^{\omega_{i^*}}$ is lengthened by right multiplication by $r_k$ for all $k\ne i^*$.
This is true for $k=0$ since $w_0^{\omega_{i^*}}\in W$ and true for $k\in I\setminus \{i^*\}$
by definition. So $d_i\in W_\af^0$.

Let $i\in I$ be general with $j\in I^s$ and $\beta\in Q^\vee$ such that
$\omega_i^\vee=\omega_j^\vee+\beta$. We have
\begin{align*}
  t_{-\omega_i^\vee} &= t_\beta \,t_{-\omega_j^\vee} \\
  &= t_\beta \,\tau_j \,d_j \\
  &= \tau_j \,t_{\tau_{j^*}\cdot \beta} \,d_j \\
  &= \tau_j \,t_{w_0^{\omega_{j^*}}\cdot\beta}\, d_j.
\end{align*}
By definition \eqref{E:d} holds.
We have $d_i\in W_\af^0$ essentially because the factorization
$(t_\beta)(t_{-\omega_j^\vee})$ is length-additive.
\end{proof}

Say that $w\in W_\af^0$ is $i$-reducible for $i\in I$, if
$\ell(w)=\ell(wd_i)+\ell(d_i)$. Say that $w\in W_\af^0$ is
irreducible if it is not $i$-reducible for any $i\in I$.

\begin{prop} \label{P:fracs} $H_*(\Gr_G)[\xi_{t_\mu}^{-1}\mid \mu\in \tQ]$ has $\Z$-basis given by
$\xi_w \prod_{i\in I} \xi_{d_i}^{e_i}$ for $w\in W_\af^0$
irreducible and $e_i\in \Z$ for $i\in I$.
\end{prop}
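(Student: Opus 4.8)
The plan is to reduce the proposition to two essentially independent facts: that the localizing set $\{\xi_{t_\mu}\mid \mu\in\tQ\}$ produces the same localization as $\{\xi_{d_i}\mid i\in I\}$, and that $H_*(\Gr_G)$ is free as a module over the subalgebra generated by the $\xi_{d_i}$, with basis the irreducible Schubert classes. Granting these, the statement is bookkeeping.

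First I would replace the localizing set. Using the multiplicativity of the translation classes recorded in \eqref{E:Petiso} (setting $w=e$ there gives $\xi_{t_{\mu+\nu}}=\xi_{t_\mu}\xi_{t_\nu}$ for antidominant $\mu,\nu$) together with the coset description $\Aut^s(I_\af)\,t_{-\omega_i^\vee}=\Aut^s(I_\af)\,d_i$ of Lemma~\ref{L:d}, I would show that for $\mu\in\tQ$, writing $\mu=-\sum_i a_i\omega_i^\vee$ with $a_i=-\ip{\mu}{\alpha_i}\ge 0$, one has $\xi_{t_\mu}=\prod_i \xi_{d_i}^{a_i}$. Thus every $\xi_{t_\mu}$ becomes a unit once the $\xi_{d_i}$ are inverted. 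Conversely, letting $o_i$ be the order of $\omega_i^\vee+Q^\vee$ in $P^\vee/Q^\vee$, the antidominant element $-o_i\omega_i^\vee$ lies in $\tQ$ and the displayed formula specializes to $\xi_{t_{-o_i\omega_i^\vee}}=\xi_{d_i}^{o_i}$; since a power of $\xi_{d_i}$ is then inverted, so is $\xi_{d_i}$. Hence $H_*(\Gr_G)[\xi_{t_\mu}^{-1}\mid\mu\in\tQ]=H_*(\Gr_G)[\xi_{d_i}^{-1}\mid i\in I]$.

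Second I would establish that $H_*(\Gr_G)$ is free over $\Z[\xi_{d_i}\mid i\in I]$ with basis the irreducible Schubert classes. The key input is the factorization of Schubert classes by the $d_i$: if $x\in W_\af^0$ is $i$-reducible then $xd_i\in W_\af^0$, the lengths subtract, and $\xi_x=\xi_{xd_i}\xi_{d_i}$. In type $A$ this is precisely Theorem~\ref{T:LM} transported through the isomorphism $\xi_x\mapsto s^{(k)}_{b(x)}$, under which $\xi_{d_i}\mapsto s_{R_i}$ and $b(x)=b(xd_i)\cup R_i$; for general $W$ it is the corresponding $k$-rectangle factorization (cf.\ the Magyar remark). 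Equivalently, multiplication by $\xi_{d_i}$ maps the Schubert basis injectively into itself, raising the $d_i$-content by one. Iterating shows every $x$ decomposes uniquely as $x^{\mathrm{irr}}\prod_i d_i^{e_i}$ with $x^{\mathrm{irr}}$ irreducible and $e_i\ge 0$, so that $\{\xi_w\prod_i\xi_{d_i}^{e_i}\mid w\text{ irreducible},\ e_i\in\Z_{\ge 0}\}$ is exactly the Schubert basis $\{\xi_x\mid x\in W_\af^0\}$; in particular the $\xi_{d_i}$ are algebraically independent.

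Finally I would pass to the localization. Since multiplication by each $\xi_{d_i}$ merely shifts the exponent $e_i$, inverting the $\xi_{d_i}$ enlarges the exponent range from $\Z_{\ge 0}$ to $\Z$ without creating relations: the localized ring is free over the Laurent ring $\Z[\xi_{d_i}^{\pm 1}\mid i\in I]$ with the same basis $\{\xi_w\mid w\text{ irreducible}\}$, hence has $\Z$-basis $\{\xi_w\prod_i\xi_{d_i}^{e_i}\mid w\text{ irreducible},\ e_i\in\Z\}$. Combined with the first step, this is the assertion. The main obstacle is the factorization of the second step for an arbitrary Weyl group: passing from the length condition defining $i$-reducibility to the ring identity $\xi_x=\xi_{xd_i}\xi_{d_i}$, and the uniqueness of the irreducible-times-rectangles decomposition. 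In type $A$ this is handed to us by Theorem~\ref{T:LM}, but in general type it is the genuine content; once it is in hand, the localization arguments of the first and last steps are routine.
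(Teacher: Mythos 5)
The paper states this proposition without proof (and likewise leaves Lemma~\ref{L:factor} unproved), so there is no written argument to compare against; but your outline is exactly the one the surrounding material is set up to support, namely (i) identify the localization at $\{\xi_{t_\mu}\mid\mu\in\tQ\}$ with the localization at $\{\xi_{d_i}\}$ via $\xi_{t_{-\omega_i^\vee}}=\xi_{d_i}$ and multiplicativity of translation classes, and (ii) use the factorization Lemma~\ref{L:factor} to exhibit $H_*(\Gr_G)$ as a free module over $\Z[\xi_{d_1},\dotsc,\xi_{d_r}]$ with basis the irreducible classes. Two points deserve more care than you give them. First, your identity $\xi_x=\xi_{xd_i}\xi_{d_i}$ is not quite the right statement: $xd_i$ need not lie in $W_\af^0$, and the correct form is $\xi_x=\xi_{\tau_j xd_i\tau_j^{-1}}\xi_{d_i}$ as in Lemma~\ref{L:factor}; this matters when you iterate, since the residual element is twisted by a diagram automorphism at each step. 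Second, the phrase ``iterating shows every $x$ decomposes uniquely'' conceals the actual crux: existence of the decomposition follows by induction on length, but \emph{uniqueness} (equivalently, that the maximal exponents $e_i$ are well defined independently of the order in which one factors, and that the resulting monomials are pairwise distinct Schubert classes) is what gives linear independence, and it is not a formal consequence of the factorization lemma. You flag this honestly at the end, and the paper itself points to Magyar's criterion \cite{M} for exactly this; in type $A$ it follows from the combinatorics of removing $k$-rectangles from a partition. With those two repairs the localization bookkeeping in your first and third steps is indeed routine and correct.
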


\begin{lem} \label{L:factor} Suppose $w\in W_\af^0$ is
$i$-reducible. Let $j\in I^s$ be such that $\omega_i^\vee + Q^\vee =
\omega_j^\vee + Q^\vee$. Then $\tau_j wd_i \tau_j^{-1}\in W_\af^0$ and
\begin{align*}
  \xi_w = \xi_{\tau_j wd_i \tau_j^{-1}} \xi_{d_i}.
\end{align*}
\end{lem}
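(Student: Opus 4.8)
The statement has two parts, and the plan is to treat them separately: first the membership $w':=\tau_j w d_i\tau_j^{-1}\in W_\af^0$, which is a length and coset computation, and then the product identity $\xi_w=\xi_{w'}\xi_{d_i}$, which I would transport to quantum cohomology through the (extended) Peterson isomorphism \eqref{E:Petiso}.

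For the membership, note first that $w'\in W_\af$ automatically, since $w,d_i\in W_\af$ and $\tau_j\in\Aut^s(I_\af)$ normalizes $W_\af$. To obtain $w'\in W_\af^0$ I would conjugate the minimality condition through $\tau_j$: because $\tau_j r_k\tau_j^{-1}=r_{\tau_j(k)}$ and conjugation by $\Aut^s(I_\af)$ preserves length, one has $\ell(w'r_k)=\ell(wd_i\,r_{\tau_j^{-1}(k)})$, so that $w'\in W_\af^0$ is equivalent to $wd_i$ having no right descent in $\tau_j^{-1}(I)=I_\af\setminus\{j\}$ (using $\tau_j(j)=0$). This descent condition I would read off from the hypothesis: $i$-reducibility is exactly the length-additive factorization $w=(wd_i)\,d_i^{-1}$, and combining $D_R(w)\subseteq\{0\}$ (valid since $w\in W_\af^0$) with the explicit form of $d_i$ from Lemma \ref{L:d} confines the right descents of $wd_i$ to $\{j\}$.

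For the product identity I would pass to $QH^*(G/B)[q_1^{-1},\dots,q_r^{-1}]$ via the Peterson isomorphism $P$ of \eqref{E:Petiso}. Two preliminary computations are needed. First, $\xi_{d_i}=\xi_{t_{-\omega_i^\vee}}$, because $t_{-\omega_i^\vee}=\tau_j d_i$ displays the $\Aut^s(I_\af)$-part that $\xi$ forgets; then \eqref{E:tau}, the definition $q_{\omega_i^\vee}=\sigma^{w_0^{\omega_i}}$, and $\xi_{\tau_i}=\xi_{\id}=1$ give $P^{-1}(\xi_{d_i})=q_{\omega_i^\vee}^{-1}$. Second, for any $x=u\,t_\nu\in W_\af^0$ (with $u\in W$) one has $P^{-1}(\xi_x)=\sigma^u q_\nu$: this follows from \eqref{E:Petiso} by writing $\nu=\lambda-\mu$ for $\mu\in\tQ$ sufficiently antidominant, clearing $\xi_{t_\mu}=P(q_\mu)$, and using that $u\,t_\lambda=(u\,t_\nu)\,t_\mu$ is length-additive. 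Since $P$ is a ring isomorphism, the asserted identity $\xi_w=\xi_{w'}\xi_{d_i}$ becomes, after applying $P^{-1}$, the quantum-cohomology relation $q_{\omega_i^\vee}\,P^{-1}(\xi_w)=P^{-1}(\xi_{w'})$. Writing $w=u\,t_\nu$ and $w'=u'\,t_{\nu'}$, with $u'$ and $\nu'$ computed from $w'=\tau_j w d_i\tau_j^{-1}$ using \eqref{E:tau}--\eqref{E:d}, this reads $q_{\omega_i^\vee}\,\sigma^u q_\nu=\sigma^{u'}q_{\nu'}$. This is the precise analogue of the computation in the proof of Proposition \ref{P:cyclic}, where left multiplication by $w_0^{\omega_1}=\tau_1 t_{-\omega_1^\vee}$ produced a single rectangle factor $s_{R_{w^{-1}(1)-1}}/s_{R_{w^{-1}(1)}}$.

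The main obstacle is this last relation. Projecting $w'=\tau_j w d_i\tau_j^{-1}$ to $W$ shows $u'=w_0^{\omega_j}\,u\,w_0^{\omega_{j^*}}(w_0^{\omega_j})^{-1}\neq u$ in general, so $q_{\omega_i^\vee}\,\sigma^u q_\nu=\sigma^{u'}q_{\nu'}$ is a genuine Seidel-type identity in $QH^*(G/B)$: it asserts that quantum multiplication by the invertible class $q_{\omega_i^\vee}=\sigma^{w_0^{\omega_i}}$ carries $\sigma^u$ to $\sigma^{u'}$ up to an explicit monomial in the quantum parameters. (Already for $G=SL_2$ this is the relation $\sigma^{s_1}\star\sigma^{s_1}=q$ in $QH^*(\mathbb{P}^1)$.) The work will be to match the pair $(u',\nu')$ produced by the conjugation $x\mapsto\tau_j x\tau_j^{-1}$ and right multiplication by $d_i$ with the shift dictated by the Seidel action, using \eqref{E:tau} and the identity $(w_0^{\omega_i})^{-1}=w_0^{\omega_{i^*}}$; the case of general $i\in I$ reduces to $i\in I^s$ by \eqref{E:d}. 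Once this $QH^*$ identity is established, applying $P$ returns $\xi_w=\xi_{w'}\xi_{d_i}$.
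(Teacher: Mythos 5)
The paper states Lemma \ref{L:factor} without proof, so there is no argument of the authors to compare yours against; I can only assess the proposal on its own terms. Your reduction of the membership claim to ``the right descents of $wd_i$ lie in $\{j\}$'' is correct, but the sentence meant to close it is a gap: the only thing that falls out of the length-additive factorization $w=(wd_i)\,d_i^{-1}$ by the exchange/triangle-inequality argument is that no right descent of $wd_i$ is a right descent of $d_i$, which is far from confining the descents to $\{j\}$. To actually prove it you need an inversion-set computation: length-additivity gives $d_i\,\mathrm{Inv}(wd_i)\subseteq\mathrm{Inv}(w)$ (with $\mathrm{Inv}(z)$ the set of positive affine roots sent negative by $z$), the condition $w\in W_\af^0$ gives that $\mathrm{Inv}(w)$ contains no finite positive root, and one must then check from $d_i=\tau_j^{-1}t_{-\omega_i^\vee}$ that for $k\in I_\af\setminus\{j\}$ the root $d_i\cdot\alpha_k$, if positive, is a finite positive root. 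None of this is in your sketch. A warning while you do it: with the paper's own example $d_2=s_4s_3s_0s_4s_1s_0$, no $w\in W_\af^0$ satisfies $\ell(w)=\ell(wd_i)+\ell(d_i)$ as literally written, since a reduced word for $w$ would then end in a reduced word for $d_i^{-1}$, whose last letter lies in $I$; the intended condition is surely $\ell(w)=\ell(wd_i^{-1})+\ell(d_i)$ with $wd_i^{-1}$ in the conclusion, and your argument will have to engage with this.

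The second half is where the proposal genuinely goes astray. You correctly compute that, after applying $P^{-1}$, the identity becomes $q_{\omega_i^\vee}\,\sigma^u q_\nu=\sigma^{u'}q_{\nu'}$, and you correctly recognize this as a Seidel-type rotation identity --- but you then declare that establishing it ``will be the work'' and stop. That identity is the entire content of the lemma, and it is a nontrivial theorem about quantum multiplication that you neither prove nor cite; worse, in this paper's logical order such identities (Proposition \ref{P:cyclic}) are \emph{consequences} of the Peterson isomorphism, so importing the Seidel representation as an input is backwards and redundant. The efficient argument never touches quantum multiplication: $\xi_{d_i}=\xi_{t_{-\omega_i^\vee}}$ is by definition the image of $q_{-\omega_i^\vee}$, and $\xi_{w'}=\xi_{wt_{\omega_i^\vee}}$ because $w'$ is exactly the $W_\af^0$-component of $wt_{\omega_i^\vee}=wd_i^{-1}\tau_j^{-1}$ --- this is where part 1 is used, to know $wt_{\omega_i^\vee}\in W_\ex^0$; then $\xi_{wt_{\omega_i^\vee}}\,\xi_{t_{-\omega_i^\vee}}=\xi_{w}$ follows from the multiplicativity already built into \eqref{E:Petiso}. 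In short, once the membership statement is proved, the product formula is bookkeeping inside $W_\ex$; your detour through $QH^*(G/B)$ replaces that bookkeeping with an unproved theorem.
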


Magyar has a criterion \cite{M} for finding the largest product of
elements $\xi_{d_i}$ that factor out of $\xi_w$ for $w\in W_\af^0$.

Define $W_\ex^0$ be the set of elements of minimum length in their
cosets in $W_\ex/W$. Then $W_\ex^0 =\Aut^s(I_\af) \ltimes W_\af^0$.
Note that $W_\ex^0\cap P^\vee = \tP$. For $x\in W_\ex^0$ let $x=zy$
where $z\in \Aut^s(I_\af)$ and $y\in W_\af^0$. Then define $\xi_x\in
H_*(\Gr_G)$ by
\begin{align}
  \xi_x = \xi_y.
\end{align}
In particular, for all $i\in I^s$,
\begin{align}
  \xi_{t_{-\omega_i^\vee}} = \xi_{\tau_i d_i} = \xi_{d_i}.
\end{align}
For $\mu\in P^\vee$ let $\la\in Q^\vee$ and $i\in I^s$ be the unique
elements such that $\mu=\omega_i^\vee+\la$. Then define $q_\mu\in
QH^*(G/B)$ by
\begin{align}
  q_\mu = q_\la \, q_{\omega_i^\vee} = q_\la\,
  \sigma^{{w_0}^{\omega_i}}.
\end{align}
One may show that $q_\la q_\mu = q_{\la+\mu}$ for all $\la,\mu\in
P^\vee$. We may extend the notation of Peterson's isomorphism by
writing
\begin{align}
  \sigma^w q_{\mu-\la} \mapsto \xi_{w t_\mu} \xi_{t_\la}^{-1}
\end{align}
where $\la,\mu\in \tP$ are such that $wt_\mu\in W_\ex^0$.


\end{document}